\title[Gepner point]{Gepner point and strong Bogomolov-Gieseker inequality 
for quintic 3-folds}
\date{}
\author{Yukinobu Toda}
\DeclareFontFamily{U}{rsfs}{%
\skewchar\font127}
\DeclareFontShape{U}{rsfs}{m}{n}{%
<-6>rsfs5<6-8.5>rsfs7<8.5->rsfs10}{}
\DeclareSymbolFont{rsfs}{U}{rsfs}{m}{n}
\DeclareRobustCommand*\rsfs{%
\@fontswitch\relax\mathrsfs}
\theoremstyle{plain}
\newtheorem{thm}{Theorem}[section]
\newtheorem{prop}[thm]{Proposition}
\newtheorem{lem}[thm]{Lemma}
\newtheorem{defi}[thm]{Definition}
\newtheorem{rmk}[thm]{Remark}
\newtheorem{case}{Case}
\newtheorem{prop-defi}[thm]{Proposition-Definition}
\newtheorem{thm-defi}[thm]{Theorem-Definition}
\newtheorem{lem-defi}[thm]{Lemma-Definition}
\newtheorem{conj}[thm]{Conjecture}
\newtheorem{exam}[thm]{Example}
\newdimen\argwidth
\def\db[#1\db]{
 \setbox0=\hbox{$#1$}\argwidth=\wd0
 \setbox0=\hbox{$\left[\box0\right]$}
  \advance\argwidth by -\wd0
 \left[\kern.3\argwidth\box0 \kern.3\argwidth\right]}
\newcommand{\aA}{\mathcal{A}}
\newcommand{\bB}{\mathcal{B}}
\newcommand{\dD}{\mathcal{D}}
\newcommand{\eE}{\mathcal{E}}
\newcommand{\fF}{\mathcal{F}}
\newcommand{\hH}{\mathcal{H}}
\newcommand{\lL}{\mathcal{L}}
\newcommand{\mM}{\mathcal{M}}
\newcommand{\oO}{\mathcal{O}}
\newcommand{\pP}{\mathcal{P}}
\newcommand{\sS}{\mathcal{S}}
\newcommand{\tT}{\mathcal{T}}
\newcommand{\uU}{\mathcal{U}}
\newcommand{\Supp}{\mathop{\rm Supp}\nolimits}
\newcommand{\Hom}{\mathop{\rm Hom}\nolimits}
\newcommand{\dR}{\mathbf{R}}
\newcommand{\ch}{\mathop{\rm ch}\nolimits}
\newcommand{\td}{\mathop{\rm td}\nolimits}
\newcommand{\Ext}{\mathop{\rm Ext}\nolimits}
\newcommand{\rank}{\mathop{\rm rank}\nolimits}
\newcommand{\Coh}{\mathop{\rm Coh}\nolimits}
\newcommand{\cneq}{\mathrel{\raise.095ex\hbox{:}\mkern-4.2mu=}}
\newcommand{\eqcn}{\mathrel{=\mkern-4.5mu\raise.095ex\hbox{:}}}
\newcommand{\HMF}{\mathrm{HMF}^{\rm{gr}}}
\newcommand{\Aut}{\mathop{\rm Aut}\nolimits}
\newcommand{\Stab}{\mathop{\rm Stab}\nolimits}
\newcommand{\Imm}{\mathop{\rm Im}\nolimits}
\newcommand{\Ree}{\mathop{\rm Re}\nolimits}
\newcommand{\Auteq}{\mathop{\rm Auteq}\nolimits}
\begin{document}

\begin{abstract}
We propose a conjectural 
stronger version of Bogomolov-Gieseker 
inequality for stable sheaves on quintic 3-folds. 
Our conjecture is derived from an attempt
to construct a Bridgeland stability condition on 
graded matrix factorizations, which 
should correspond to the Gepner point 
via mirror symmetry and Orlov equivalence. 
We 
prove our conjecture in the 
rank two case. 
\end{abstract}

\maketitle

\setcounter{tocdepth}{1}
\tableofcontents

\section{Introduction}
\subsection{Bogomolov-Gieseker (BG) inequality}
First of all, let us recall the following 
classical result by Bogomolov and Gieseker:
\begin{thm}\emph{(\cite{Bog},~\cite{Gie})}
Let $X$ be a smooth projective complex variety and 
$H$ an ample divisor in $X$. 
For any torsion free $H$-slope 
stable sheaf $E$ on $X$, 
we have 
\begin{align*}
\Delta(E) \cdot H^{\dim X-2} \ge 0. 
\end{align*}
Here $\Delta(E)$ is the discriminant
\begin{align*}
\Delta(E) \cneq \ch_1(E)^2 - 2\ch_0(E) \ch_2(E).
\end{align*}
\end{thm}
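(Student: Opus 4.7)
The plan is to follow the classical strategy, which has two main phases: reduction to surfaces, and a discriminant calculation on a surface.

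For the reduction, I would invoke the Mehta--Ramanathan restriction theorem: for $m\gg 0$ and sufficiently general divisors $D_1,\ldots,D_{\dim X-2}\in|mH|$, the complete intersection $S:=D_1\cap\cdots\cap D_{\dim X-2}$ is a smooth projective surface and $E|_S$ remains $H|_S$-slope stable. Since
\[
\Delta(E|_S)=m^{\dim X-2}\,\Delta(E)\cdot H^{\dim X-2},
\]
it is enough to establish the inequality $\Delta(E|_S)\geq 0$ on the surface $S$.

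On a surface, the essential point is that, in characteristic zero, $\End(E)=E^{\vee}\otimes E$ is $H$-slope semistable (as a tensor product of slope-semistable sheaves). A direct Chern character calculation gives $\rk(\End E)=r^2$, $c_1(\End E)=0$ and $\ch_2(\End E)=-\Delta(E)$, so $\End E$ has slope zero. For any line bundle $L$ with $L\cdot H<0$, semistability then forces $H^0(S,\End(E)\otimes L)=0$. Combining this with Serre duality, which identifies $H^2(\End(E)\otimes L)$ with $H^0(\End(E)\otimes(K_S-L))$, and with Riemann--Roch
\[
\chi(\End(E)\otimes L)=r^2\chi(\oO_S)-\Delta(E)+\tfrac{r^2}{2}\,L\cdot(L-K_S),
\]
one extracts a lower bound on $\Delta(E)$ valid for every $L$ in a suitable region. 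Finally, the Hodge index theorem provides enough room to vary $L$ (or rescale by $n$) so that the right-hand side grows quadratically, forcing $\Delta(E)\geq 0$ in the limit.

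The main obstacle, as usual in this argument, is controlling $H^2(\End(E)\otimes L)$: Serre duality replaces $L$ by $K_S-L$, which has \emph{positive} slope, so the naive semistability vanishing does not apply to the $H^2$ term. The standard remedy is to work with a family of twists $L^{\otimes n}$, chosen so that $L$ lies in the negative component of the positive cone (which is nonempty by Hodge index since $H^2>0$): then $L\cdot H<0$ kills $H^0$, while the Euler characteristic grows like $\tfrac{r^2}{2}n^2L^2>0$, and any possible $H^2$ contribution from semistable subquotients is too small to absorb this growth unless $\Delta(E)\geq 0$. Turning this asymptotic comparison into a clean numerical inequality is where one has to be careful; everything else is a formal consequence of Mehta--Ramanathan, Riemann--Roch and Hodge index.
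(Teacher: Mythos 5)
This statement is quoted in the paper as the classical theorem of Bogomolov and Gieseker, with references but no proof, so there is no internal argument to compare against; I will therefore assess your sketch on its own terms. The reduction to a surface via Mehta--Ramanathan and the computation $\rk(\End E)=r^2$, $c_1(\End E)=0$, $\ch_2(\End E)=-\Delta(E)$ are correct and standard, as is the vanishing $H^0(S,\End(E)\otimes L)=0$ for $L\cdot H<0$.

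The gap is in the final step on the surface. With $L^2>0$ and $L\cdot H<0$ you correctly get $h^0(\End(E)\otimes L^{\otimes n})=0$ and $\chi(\End(E)\otimes L^{\otimes n})\sim\tfrac{r^2}{2}n^2L^2$, forcing $h^2=h^0\bigl(\End(E)\otimes\oO_S(K_S-nL)\bigr)$ to grow at that rate. But this is not a contradiction: that Serre-dual sheaf is semistable of rank $r^2$ and slope $\mu_n\sim n\lvert L\cdot H\rvert$, and the best general bound for $h^0$ of such a sheaf is $\sim\tfrac{r^2\mu_n^2}{2H^2}=\tfrac{r^2n^2(L\cdot H)^2}{2H^2}$, which by Hodge index, $(L\cdot H)^2\ge L^2H^2$, is at least as large as the growth you need to rule out. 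So Hodge index works \emph{against} you: the two quadratics in $n$ have the same order, the leading coefficients either go the wrong way or (for $L\equiv\lambda H$) cancel exactly, and the surviving linear-in-$n$ terms swamp the constant $-\Delta(E)$ you are trying to detect. The claim that the $H^2$ contribution is ``too small to absorb this growth'' is therefore false as stated. The standard fix is to replace line-bundle twists by tensor powers: $F^{\otimes n}$ for $F=\End(E)$ is again semistable with $c_1=0$ and $\chi(F^{\otimes n})=r^{2n}\chi(\oO_S)-nr^{2n-2}\Delta(E)$, so if $\Delta(E)<0$ the Euler characteristic outgrows the rank by a factor of $n$, while $h^0(F^{\otimes n})+h^2(F^{\otimes n})$ stays bounded by a constant times $r^{2n}$ (restrict to a fixed general curve $C\in\lvert aH\rvert$ on which $F|_C$, hence $F^{\otimes n}|_C$, is semistable, and use that a semistable bundle of bounded slope on a curve has $h^0$ linear in its rank); letting $n\to\infty$ gives $\Delta(E)\ge 0$. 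Equivalently one can invoke Bogomolov's original instability argument via sections of $\Sym^{kr}(E)\otimes\det(E)^{-k}$. Either way, some mechanism making $\chi$ grow strictly faster than the rank is essential, and a fixed-rank twist by $L^{\otimes n}$ does not provide it.
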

It has been an interesting problem to improve 
the BG inequality for higher rank stable sheaves 
(cf.~\cite{Jar}, \cite{Naka}).
So far such an improvement is only known for 
some particular surfaces, e.g. K3 surfaces or 
Del Pezzo surfaces, 
which easily follows from Riemann-Roch theorem and 
Serre duality (cf.~Lemma~\ref{strong:K3}, \cite[Appendix~A]{DRY}).
In the 3-fold case,
such an improvement is only known for  
rank two stable sheaves on $\mathbb{P}^3$ by 
Hartshorne~\cite{Hart2}. In a case of other 3-fold, 
even a conjectural improvement is not known. 
The purpose of this note is to propose
a conjectural improvement of BG inequality for stable sheaves on 
quintic 3-folds, motivated by an idea from 
mirror symmetry and matrix factorizations. 
We first state the resulting conjecture:   
\begin{conj}\label{intro:strong}
Let $X \subset \mathbb{P}_{\mathbb{C}}^4$ be a smooth 
quintic 3-fold and $H\cneq c_1(\oO_X(1))$. 
Then for any torsion free $H$-slope stable 
sheaf on $X$ 
with $c_1(E)/ \rank(E)=-H/2$, 
we have the following inequality: 
\begin{align}\label{intro:bound}
\frac{\Delta(E) \cdot H}{\rank(E)^2} > 1.5139 \cdots. 
\end{align}
\end{conj}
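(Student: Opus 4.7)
The plan is to derive the inequality from a Bridgeland stability condition $\sigma_G = (Z_G, \pP_G)$ on $D^b(X)$ corresponding, via Orlov's equivalence $\Phi\colon \HMF(W) \xrightarrow{\sim} D^b(X)$, to the Gepner point in the stability manifold on graded matrix factorizations of the quintic polynomial $W$. The slope condition $c_1(E)/\rank(E) = -H/2$ is precisely the $\mathbb{Z}/5$-invariant Gepner slope (it forces the rank to be even), so torsion-free slope-stable $E$ with this slope should correspond to $\sigma_G$-semistable objects, and \eqref{intro:bound} will then follow from the support property of $\sigma_G$.

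First, construct $\sigma_G$: specify a double-tilted heart $\aA_G \subset D^b(X)$ and a central charge
\[
Z_G(E) = \sum_{k=0}^{3} a_k \, \ch_k(E) \cdot H^{3-k},
\]
with coefficients $a_k$ algebraic in $\zeta_5 = e^{2\pi i/5}$, chosen so that $Z_G$ intertwines multiplication by $\zeta_5$ with the Serre-twist-plus-shift autoequivalence inherited from the $\mathbb{Z}/5$-action on $\HMF(W)$. Then establish the support property: find a quadratic form $Q$ on the numerical Grothendieck group with $Q(E) \ge 0$ on every $\sigma_G$-semistable $E$ and with $\ker Z_G$ negative definite with respect to $Q$. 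Under the Gepner slope constraint, $Q(E)$ should reduce to a positive multiple of $\Delta(E) \cdot H - c \cdot \rank(E)^2$ with $c = 1.5139\ldots$, the constant emerging from the Gepner phase structure (a combination of $\cos(k\pi/5)$ and $\sin(k\pi/5)$ against $Z_G$).

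The decisive step is to show that every torsion-free $H$-slope stable $E$ with $c_1(E)/\rank(E) = -H/2$, appropriately shifted, lies in $\aA_G$ and is $\sigma_G$-semistable. For rank two this can be done by hand because the numerical types of potential $\sigma_G$-destabilizers are severely constrained by the Gepner phase. For $\rank(E) \ge 4$, the natural route is to exclude every $\sigma_G$-destabilizer using the generalized Bogomolov--Gieseker inequality for tilt-semistable objects on $X$; an elementary alternative is to restrict to a smooth surface $S \in |H|$, invoke Bogomolov's restriction theorem together with the strong BG inequality on the K3 surface $S$ (cf.~Lemma~\ref{strong:K3}) to bound $\Delta(E|_S) \cdot H_S$, and translate back via $\Delta(E) \cdot H = \Delta(E|_S)$. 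Once $\sigma_G$-semistability is in hand, the bound $Q(E) \ge 0$ yields $\Delta(E) \cdot H / \rank(E)^2 \ge c = 1.5139\ldots$, and strictness in \eqref{intro:bound} is automatic since the left-hand side is rational while $c$ is irrational.

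The main obstacle will be establishing $\sigma_G$-semistability for $\rank(E) \ge 4$. The generalized Bogomolov--Gieseker inequality for tilt-semistable objects on Calabi--Yau 3-folds is itself a long-standing open conjecture, and the K3-restriction alternative suffers from the fact that the sharp K3 bound $\Delta(E|_S) \ge -2\rank(E)^2$ is too weak to yield the Gepner constant uniformly in the rank. Closing this gap seems to require either a proof of the generalized BG conjecture on $X$, or a genuinely new geometric input, such as a direct description of the simple objects of $\aA_G$ via Orlov's equivalence that would permit case-by-case exclusion of destabilizers.
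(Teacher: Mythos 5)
You should first note that the statement is a \emph{conjecture} in the paper: the author does not prove it in general, but only in the rank two case, and the Gepner-point picture is used to \emph{motivate} the inequality, not to derive it. Your proposal inverts the paper's logic and thereby becomes circular. In the paper, Conjecture~\ref{strong} (equivalently \eqref{intro:bound}) is the \emph{input} needed to build the double-tilted heart $\aA_G$ (via Lemma~\ref{lem:property}, which needs $\Imm Z_G^{\dag}(E)\ge 0$ for $\mu_{B,H}$-stable $F$ with $H^2\ch_1^B(F)=0$, and this is exactly where $2c/b=1.5139\cdots$ comes from); even granting the inequality, the existence of the Gepner stability condition remains open (Conjecture~\ref{conj:const}), because the Harder--Narasimhan property for the pair $(Z_G^{\dag},\aA_G)$ is not established. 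You propose to run the implication the other way, deducing \eqref{intro:bound} from the support property of a stability condition whose construction presupposes \eqref{intro:bound}. You candidly flag that the key semistability step for $\rank(E)\ge 4$ is out of reach, so the proposal is not a proof. Your ``elementary alternative'' also fails at the first step: a smooth member of $|H|$ on a quintic 3-fold $X\subset\mathbb{P}^4$ is a quintic surface in $\mathbb{P}^3$, a surface of general type, not a K3 surface, so the Mukai-type bound of Lemma~\ref{strong:K3} is unavailable; that lemma appears in the paper only as an analogy explaining why a strengthened inequality should be expected at a Gepner point.

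For the one case the paper actually proves, your sketch does not match and is not workable as stated. The paper's rank-two argument is entirely classical sheaf theory: reduce to $E$ reflexive (since $\Delta(E^{\vee\vee})\cdot H\ge\Delta(E)\cdot H$), note $c_1(E)=-H$ so $\Delta(E)\cdot H=-H^3+4c_2(E)\cdot H$ and the classical BG inequality leaves only the single value $c_2(E)\cdot H=2$ to exclude; then set $F=E^{\vee}$, use the rank-two reflexive isomorphism $F\cong F^{\vee}\otimes\det(F)$ together with $\ch(F^{\vee})$ computed from $\eE xt^1(F,\oO_X)$ to pin down $\ch_3(F)=-\tfrac16+\tfrac{|Q|}{2}$, and apply Riemann--Roch to get $\chi(F)=4+\tfrac{|Q|}{2}$. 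The two cases $H^0(X,F)=0$ and $H^0(X,F)\neq 0$ are then killed by, respectively, a universal-extension construction (whose stability, via \cite[Lemma~2.1]{noteBG}, lets the classical BG inequality bound $\dim\Ext^1(F,\oO_X)\le 3$ against the forced $\ge 4$) and a section $0\to\oO_X\to F\to\oO_X(H)\otimes I_Z\to 0$ combined with the Castelnuovo genus bound for the curve $Z$ with $H\cdot[Z]=2$. None of this uses, or needs, a Bridgeland stability condition. If you want to contribute beyond the paper, the honest target is exactly the gap you identify, but it must be attacked by sheaf-theoretic means (or by first proving the BMT-type inequality on quintics), not by assuming the Gepner stability condition whose existence is downstream of the statement you are trying to prove.
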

The RHS of (\ref{intro:bound})
is a certain irrational real 
number contained in $\mathbb{Q}(e^{2\pi \sqrt{-1}/5})$, 
and the detail will be discussed in
 Conjecture~\ref{strong}.
Our conjecture is derived from an attempt to construct 
a Bridgeland stability condition
on $D^b \Coh(X)$ corresponding 
to the Gepner point in the stringy 
K\"ahler moduli space of $X$. 
The RHS of (\ref{intro:bound})
is related to the coefficient of 
the corresponding central charge. 
It seems that Conjecture~\ref{intro:strong}
does not appear in literatures even in 
the rank two case, which we will give a 
proof in this note: 
\begin{prop}
Conjecture~\ref{intro:strong}
is true if $\rank(E)=2$. 
\end{prop}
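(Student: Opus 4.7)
The plan is to translate the strict inequality into a concrete integer lower bound on $c_2(E)\cdot H$ and then exclude the single remaining borderline case. For a rank two torsion free sheaf $E$ with $c_1(E) = -H$, direct computation gives $\Delta(E) = -H^2 + 4c_2(E)$ and hence $\Delta(E)\cdot H = 4\,c_2(E)\cdot H - H^3 = 4\,c_2(E)\cdot H - 5$ using $H^3 = 5$. Since $c_2(E)\cdot H$ is an integer, the desired inequality $\Delta(E)\cdot H/\rank(E)^2 > 1.5139\ldots$ is equivalent to $c_2(E)\cdot H \geq 3$. The classical Bogomolov-Gieseker theorem already gives $c_2(E)\cdot H \geq 5/4$, so only the case $c_2(E)\cdot H = 2$ needs to be excluded.

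First I would reduce to $E$ being reflexive. The canonical inclusion $E \hookrightarrow E^{\vee\vee}$ has cokernel $Q$ of codimension $\geq 2$, and slope stability passes to $E^{\vee\vee}$. From $c(E^{\vee\vee}) = c(E)\, c(Q)$ with $c_1(Q) = 0$, any 1-dimensional component of $Q$ strictly decreases $c_2\cdot H$, so that $c_2(E^{\vee\vee})\cdot H \leq 1 < 5/4$ would violate BG for $E^{\vee\vee}$. Hence $Q$ is 0-dimensional, $c_2(E^{\vee\vee})\cdot H = 2$, and we may assume $E$ is reflexive.

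Next, using Calabi-Yau Serre duality together with the rank-two identity $E^\vee \cong E(H)$ coming from $c_1(E) = -H$, and Hirzebruch-Riemann-Roch, I would compute $\chi(E) = (c_3(E)-8)/2$ and $\chi(E(1)) = (c_3(E)+8)/2$; combining with the stability vanishings $H^0(E) = H^3(E) = H^3(E(1)) = 0$ and the duality $H^2(E(k)) \cong H^1(E(1-k))^\vee$ yields the identity $h^0(E(1)) = c_3(E)$. If $E$ is not locally free, Hartshorne's bound for reflexive rank two sheaves on a smooth threefold gives $c_3(E) > 0$, so $E(1)$ has a nonzero section; Serre's construction then produces a locally complete intersection curve $C \subset X$ of degree $c_2(E(1))\cdot H = 2$ with $\omega_C \cong \oO_C(H)$ and hence arithmetic genus $p_a(C) = 2$. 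A case analysis of degree-two locally complete intersection curves in $\mathbb{P}^4$ (distinguishing planar conics, pairs of lines, and possible ribbon structures on lines) combined with the constraint imposed by the dualizing sheaf excludes this configuration, handling the non-locally-free case.

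The main obstacle is the remaining locally free case, where $c_3(E) = 0$ forces $h^0(E(1)) = 0$ and the preceding Serre-construction argument breaks down. My plan here is to restrict to a general smooth hyperplane section $S \in |H|$, a quintic surface with $K_S \sim H|_S$, $K_S^2 = 5$, $\chi(\oO_S) = 5$, on which $E|_S$ is slope-semistable with $c_1 = -K_S$, $c_2 = 2$ and $\chi(E|_S) = 13$, and to obtain a contradiction from a fine analysis of such surface sheaves via Riemann-Roch on $S$ combined with stability constraints on cohomology of twists, possibly again through a Serre construction on $S$. This borderline locally free case requires a detailed geometric argument specific to the quintic, analogous in spirit to Hartshorne's non-existence results on $\mathbb{P}^3$ (\cite{Hart2}).
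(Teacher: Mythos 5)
Your opening reduction coincides with the paper's: with $c_1(E)=-H$ one has $\Delta(E)\cdot H=4c_2(E)\cdot H-5$, the claimed bound becomes $c_2(E)\cdot H\ge 3$, the classical BG inequality leaves only $c_2(E)\cdot H=2$ to exclude, and the passage to $E^{\vee\vee}$ is fine. Beyond that point, however, there are genuine gaps. First, the identity $h^0(E(1))=c_3(E)$ is not justified: Riemann--Roch together with the vanishings you invoke only yields $\chi(E(1))=4+c_3(E)/2$, i.e. $h^0(E(1))-h^1(E(1))+h^2(E(1))=4+c_3(E)/2$, and nothing forces $h^1$ and $h^2$ to cancel so as to isolate $h^0$; moreover, for a reflexive non-locally-free sheaf Serre duality relates $H^i(E)$ to $\Ext^{3-i}(E,\oO_X)$, which differs from $H^{3-i}(E^{\vee})$ by contributions of $\eE xt^1(E,\oO_X)$, so the duality bookkeeping you describe is only valid in the locally free case. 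Second, and decisively, the locally free case is not proved at all: you explicitly leave it as a plan to restrict to a quintic surface and perform ``a fine analysis.'' That is the hard core of the statement, and the plan itself has a further obstacle, since slope (semi)stability of $E|_S$ for a degree-one hypersurface section does not follow from stability of $E$ without an effective restriction theorem.

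For comparison, the paper never splits on local freeness; it sets $F\cneq E^{\vee}\cong E(1)$ and splits on whether $h^0(X,F)$ vanishes. If $h^0(F)=0$, then $\chi(F)=4+\lvert Q\rvert/2\ge 4$ together with stability gives $\dim\Ext^1(F,\oO_X)=h^2(F)\ge 4$; the universal extension $0\to \oO_X\otimes\Ext^1(F,\oO_X)^{\vee}\to\uU\to F\to 0$ is again slope stable by \cite[Lemma~2.1]{noteBG}, and the classical BG inequality applied to $\uU$ forces $\dim\Ext^1(F,\oO_X)\le 3$, a contradiction. If $h^0(F)\neq 0$, a section yields $0\to\oO_X\to F\to\oO_X(H)\otimes I_Z\to 0$ with $H\cdot[Z]=2$ and $\chi(\oO_Z)\le -1$, contradicting the Castelnuovo bound $h^1(\oO_Z)=0$ for a degree-two curve. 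Your Serre-construction step is essentially this second case, but your outline supplies nothing that handles the first case (no sections of $E(1)$), which is exactly where the locally free bundles you could not treat would live.
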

The above result will be proved in Subsection~\ref{rank:two}. 
Based on a similar idea, we also propose a 
conjectural Clifford type bound for
stable coherent systems on quintic 
surfaces (cf.~Section~\ref{sec:Cli}). 
Below we discuss background of the 
derivation of the above conjecture.

\subsection{Background}
The notion of stability conditions on 
triangulated categories introduced by Bridgeland~\cite{Brs1}
has turned out to be an important mathematical 
object to study. 
However 
it has been a problem for more than ten 
years to construct 
Bridgeland 
stability conditions on the derived 
categories of coherent sheaves on quintic 3-folds. 
From a picture of the mirror symmetry, 
the space of stability conditions on 
a quintic 3-fold is expected to be related to 
its stringy K\"ahler moduli space, which 
is described in Figure~\ref{fig:one}. 
In Figure~\ref{fig:one}, we see three 
special points, large volume limit, conifold point 
and Gepner point. 
A conjectural construction of a Bridgeland 
stability condition near the large volume limit
was proposed by Bayer, Macri and the author~\cite{BMT}, and 
we reduced the problem to showing a BG
type inequality evaluating $\ch_3(\ast)$ for certain two term 
complexes.
The main conjecture in~\cite{BMT}
is not yet proved except in the $\mathbb{P}^3$
case~\cite{MaBo}, and we face our lack of knowledge 
on the set of Chern characters of stable objects.

In this note, we focus on the Gepner point. 
A corresponding stability condition 
is presumably 
constructed as a Gepner type stability condition~\cite{TGep} 
with respect to the pair
\begin{align*}
\left( \mathrm{ST}_{\oO_X} \circ \otimes \oO_X(1), 
\frac{2}{5} \right)
\end{align*}
where $\mathrm{ST}_{\oO_X}$ is the 
Seidel-Thomas twist~\cite{ST} associated to $\oO_X$. 
Combined with Orlov's result~\cite{Orsin}, 
as discussed in~\cite{Wal}, 
such a stability condition is expected to give 
a natural stability condition on 
graded matrix factorizations of the defining polynomial of 
the quintic 3-fold. 
 One may expect that constructing a Gepner point 
also requires such a conjectural inequality. 
It seems worth formulating 
a conjectural BG type inequality which 
arises in an attempt to construct a Gepner point, 
so that making it clear what we should know on 
Chern characters of stable sheaves. 
Our Conjecture~\ref{intro:strong} is the 
resulting output. 
The inequality (\ref{intro:bound}) itself 
is interesting since there have been 
several attempts to improve the classical BG inequality.  
Assuming Conjecture~\ref{intro:bound}, we construct data
which presumably give a 
Bridgeland stability condition 
corresponding to the Gepner point. 

Compared to the lower degree 
cases studied in~\cite{TGep}, 
constructing Gepner type 
stability conditions is much 
harder in quintic cases, and 
most of the attempts 
are still conjectural. 
This is the reason 
we have separated the arguments for
the quintic case from the previous paper~\cite{TGep}.
We hope that the arguments in this note lead to 
future developments of the study of Chern characters of 
stable objects on 3-folds.

\begin{figure}[htbp]
 \begin{center}
  \includegraphics[width=60mm]{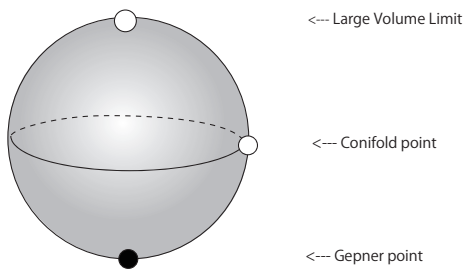}
 \end{center}
 \caption{Stringy K\"ahler moduli space
 of a quintic 3-fold}
 \label{fig:one}
\end{figure}

\subsection{Acknowledgment}
The author would like to thank 
Kentaro Hori, Kyoji Saito and Atsushi 
Takahashi for valuable discussions. 
The author also would like to thank Johannes Walcher
for pointing out the reference~\cite{Wal}. 
This work is supported by World Premier 
International Research Center Initiative
(WPI initiative), MEXT, Japan. This work is also supported by Grant-in Aid
for Scientific Research grant (22684002)
from the Ministry of Education, Culture,
Sports, Science and Technology, Japan.

\subsection{Notation and convention}
All the varieties or polynomials are defined over
complex numbers. For a smooth projective variety $X$
of dimension $n$
and $E \in \Coh(X)$, we write its
Chern character as a vector
\begin{align*}
\ch(E)=(\ch_0(E), \ch_1(E), \cdots, \ch_n(E))
\end{align*}
for $\ch_i(E) \in H^{2i}(X)$. 
For a triangulated category $\dD$ and a set of 
objects $\sS$ in $\dD$, we denote by
$\langle \sS \rangle_{\rm{ex}}$ the smallest
extension closed subcategory in $\dD$
which contains $\sS$. 

\section{Background}\label{sec:back}

\subsection{Bridgeland stability condition}
Let $\dD$ be a triangulated category and $K(\dD)$ its 
Grothendieck group. 
We first recall Bridgeland's definition of 
stability conditions on it. 
\begin{defi}\label{defi:stab} \emph{(\cite{Brs1})}
A stability condition $\sigma$
on $\dD$ consists of a pair $(Z, 
\{\pP(\phi)\}_{\phi \in \mathbb{R}})$
\begin{align}\label{pair2}
Z \colon K(\dD) \to \mathbb{C}, \quad 
\pP(\phi) \subset \dD
\end{align}
where $Z$
is a group homomorphism 
(called central charge) 
and $\pP(\phi)$ is
a full subcategory (called $\sigma$-semistable objects 
with phase $\phi$)
satisfying the following conditions: 
\begin{itemize}
\item For $0\neq E \in \pP(\phi)$, 
we have $Z(E) \in \mathbb{R}_{>0} \exp(\sqrt{-1} \pi \phi)$. 
\item For all $\phi \in \mathbb{R}$, we have 
$\pP(\phi+1)=\pP(\phi)[1]$. 
\item For $\phi_1>\phi_2$ and $E_i \in \pP(\phi_i)$, we have 
$\Hom(E_1, E_2)=0$. 

\item For each $0\neq E \in \dD$, there is
 a collection of distinguished triangles 
\begin{align*}
E_{i-1} \to E_i \to F_i \to E_{i-1}[1], \quad 
E_N=E, \ E_0=0
\end{align*}
with $F_i \in \pP(\phi_i)$ and  
$\phi_1> \phi_2> \cdots > \phi_N$. 
\end{itemize}
\end{defi}
The full subcategory $\pP(\phi) \subset \dD$ is 
shown to be an abelian category, and its 
simple objects are called $\sigma$-stable. 
In~\cite{Brs1}, Bridgeland 
shows that there is a natural topology on 
the  
 set of `good' stability conditions 
$\Stab(\dD)$, and 
its each connected component 
has a structure of a complex manifold. 
Let $\Aut(\dD)$
be the group of autoequivalences on $\dD$. 
There is a left $\Aut(\dD)$-action on 
the set of stability conditions on $\dD$. 
For $\Phi \in \Aut(\dD)$, it
acts on the pair (\ref{pair2}) as 
follows:  
\begin{align*}
\Phi_{\ast} (Z, \{\pP(\phi)\}_{\phi \in \mathbb{R}})
=(Z \circ \Phi^{-1}, \{ \Phi(\pP(\phi)) \}_{\phi \in \mathbb{R}}). 
\end{align*}
There is also a right $\mathbb{C}$-action on 
the set of stability conditions on $\dD$. 
For $\lambda \in \mathbb{C}$, it acts on the pair (\ref{pair2})
as follows: 
\begin{align*}
 (Z, \{\pP(\phi)\}_{\phi \in \mathbb{R}}) \cdot (\lambda)
= (e^{-\sqrt{-1}\pi \lambda} Z, \{ \pP(\phi + \Ree \lambda) \}_{\phi \in \mathbb{R}}). 
\end{align*}
The notion of Gepner type stability conditions is 
defined as follows: 
\begin{defi}\emph{(\cite{TGep})}
A stability condition $\sigma$ on $\dD$ is called 
Gepner type with respect to $(\Phi, \lambda) \in \Aut(\dD) \times \mathbb{C}$
if the following condition holds: 
\begin{align*}
\Phi_{\ast}\sigma= \sigma \cdot (\lambda). 
\end{align*}
\end{defi}

\subsection{Gepner type stability conditions on  
graded matrix factorizations}
Let $W$ be a homogeneous element 
\begin{align}\label{def:A}
W \in A \cneq \mathbb{C}[x_1, x_2, \cdots, x_n]
\end{align}
of degree $d$
such that
$(W=0) \subset \mathbb{C}^n$ has an isolated 
singularity at the origin. 
For a graded $A$-module $P$, 
we denote by $P_i$ its degree $i$-part, 
and $P(k)$ the graded $A$-module 
whose grade is shifted by $k$, i.e. 
$P(k)_{i}= P_{i+k}$. 
\begin{defi}
A graded matrix factorization of $W$ is data
\begin{align}\label{MF}
P^0 \stackrel{p^0}{\to} P^1 \stackrel{p^1}{\to}
P^0(d)
\end{align}
where $P^i$ are graded free $A$-modules of finite rank, $p^i$ 
are homomorphisms of graded $A$-modules, satisfying the 
following conditions:
\begin{align*}
\quad p^1 \circ p^0= \cdot W, \quad 
p^0(d) \circ p^1= \cdot W.
\end{align*}
\end{defi}
The category $\HMF(W)$ 
is defined to be the triangulated 
category whose objects consist of 
graded matrix factorizations of $W$ 
(cf.~\cite{Orsin}). 
 The grade shift functor
$P^{\bullet} \mapsto P^{\bullet}(1)$ 
induces the 
autoequivalence $\tau$ of $\HMF(W)$, 
which satisfies the 
 following identity: 
\begin{align}\label{taud}
\tau^{\times d} =[2]. 
\end{align}
The following is the main conjecture in~\cite{TGep}:
\begin{conj}\label{conj:main}
There is a Gepner type stability condition
\begin{align*}
\sigma_G=(Z_G, \{\pP_G(\phi)\}_{\phi \in \mathbb{R}})
\in  \Stab(\HMF(W))
\end{align*}
with respect to 
$(\tau, 2/d)$, whose central charge $Z_G$ is given by 
\begin{align}\label{Z_G}
Z_G(P^{\bullet})= \mathrm{str}(e^{2\pi \sqrt{-1}/d} \colon 
P^{\bullet} \to P^{\bullet}). 
\end{align}
\end{conj}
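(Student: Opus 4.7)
The plan is to transport the problem from $\HMF(W)$ to $D^b\Coh(X)$ via Orlov's equivalence $\Psi \colon \HMF(W) \simeq D^b\Coh(X)$, where $X = (W = 0) \subset \mathbb{P}^{n-1}$ is the Calabi-Yau hypersurface (so $X$ is the quintic 3-fold when $n = d = 5$). Under $\Psi$ the grade-shift $\tau$ corresponds to $\Phi \cneq \mathrm{ST}_{\oO_X} \circ (- \otimes \oO_X(1))$, and the identity (\ref{taud}) is mirrored by the Calabi-Yau property $S_X = [3]$ of $X$. Thus constructing $\sigma_G$ on $\HMF(W)$ is equivalent to constructing $\sigma = (Z, \pP) \in \Stab(D^b\Coh(X))$ with $\Phi_\ast \sigma = \sigma \cdot (2/d)$ and with central charge $Z = \Psi_\ast Z_G$.

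Next, I would build the heart of $\sigma$ by the double-tilt procedure of~\cite{BMT}. The condition $c_1/\rank = -H/2$ in Conjecture~\ref{intro:strong} indicates that the first tilt of $\Coh(X)$ should be taken at slope $\mu = -1/2$, producing an intermediate heart $\bB$; a tilt-stability on $\bB$ then yields a second tilt with heart $\aA$. The tilt parameters are not free: they are pinned down by the requirement that $\Phi$ permute the hearts of $\sigma, \sigma\cdot(2/d), \ldots, \sigma\cdot(2(d-1)/d)$ cyclically and that $\Phi^d = [2]$ matches the rotation by $2\pi$ of the central charge. Pulling $Z_G$ back through $\Psi$ via Grothendieck-Riemann-Roch expresses it as
\begin{align*}
Z(E) = \sum_{i=0}^{3} \alpha_i\, \ch_i(E) \cdot H^{3-i},
\end{align*}
with coefficients $\alpha_i \in \mathbb{Q}(e^{2\pi\sqrt{-1}/d})$ read off from the characteristic classes of the Orlov kernel.

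The crucial step is verifying the positivity $Z(E) \in \mathbb{H} \cup \mathbb{R}_{<0}$ for every $0 \neq E \in \aA$. For torsion free $H$-slope stable $E$ with $c_1(E)/\rank(E) = -H/2$, a direct comparison of $\Ree Z$ and $\Imm Z$ in the expansion above converts this positivity into precisely the inequality (\ref{intro:bound}), the explicit constant $1.5139\cdots$ being the critical slope at which $Z(E)$ hits the negative real axis. This is why the RHS of (\ref{intro:bound}) belongs to $\mathbb{Q}(e^{2\pi\sqrt{-1}/5})$ and why Conjecture~\ref{intro:strong} is the natural hypothesis to isolate. For general objects of $\aA$ the bound must be promoted to an inequality for tilt-stable two-term complexes in $\bB$, in the spirit of the main conjecture of~\cite{BMT}.

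The remaining ingredients are the Harder-Narasimhan and support properties together with the Gepner equation $\Phi_\ast\sigma = \sigma\cdot(2/d)$. Invariance is built into the construction: by (\ref{Z_G}) one has $Z_G \circ \tau^{-1} = e^{-2\pi\sqrt{-1}/d} Z_G$, and compatibility of the tilt data with $\Phi$ gives the same rotation on the hearts. The HN property reduces to noetherianity of $\aA$ by the standard argument of~\cite{Brs1}. The main obstacle is the positivity and support property, both of which hinge on a strong BG-type inequality for \emph{all} tilt-stable complexes on the quintic; even the sheaf version (Conjecture~\ref{intro:strong}) is open beyond the rank two case treated in this paper, so a full proof of Conjecture~\ref{conj:main} appears to be out of reach with current technology and is the reason the statement is formulated as a conjecture.
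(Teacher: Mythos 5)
The statement you were asked to prove is Conjecture~\ref{conj:main}, and the paper offers no proof of it: for the quintic case its entire content is to translate the conjecture, via the Orlov equivalence and a proposed double tilt, into the stronger Bogomolov--Gieseker inequality of Conjecture~\ref{strong}, of which only the rank two case is actually proved. Your outline reproduces exactly this programme --- transport by $\Psi$, identification of $\tau$ with $\mathrm{ST}_{\oO_X}\circ\otimes\oO_X(1)$, double tilting of $\Coh(X)$ at $B=-H/2$ in the style of~\cite{BMT}, and the reduction of the positivity of $Z_G^{\dag}$ to the inequality $\Delta(E)\cdot H/\rank(E)^2>2c/b=1.5139\cdots$ --- and you correctly conclude that it does not yield a proof. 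So on the level of strategy you have matched the paper.

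One substantive point where your account diverges from the paper's: you locate the residual obstruction in the positivity and support properties, asserting that these require a BMT-type $\ch_3$-inequality for all tilt-stable two-term complexes, while dismissing the Harder--Narasimhan property as ``noetherianity plus the standard argument.'' The paper says essentially the opposite. Granting Conjecture~\ref{strong}, the positivity condition (\ref{positivity}) for the double-tilted heart $\aA_G$ holds \emph{by construction} (see the remark following Conjecture~\ref{conj:const}); no further inequality on $\ch_3$ of tilt-stable objects is invoked at the Gepner point, in contrast to the large volume limit. What the paper flags as genuinely hard is the Harder--Narasimhan property for the pair $(Z_G^{\dag},\aA_G)$: the Harder--Narasimhan argument for $\nu_G$-stability on the intermediate heart $\bB_{B,H}$ works only because $H^2\ch_1^B(\ast)$ takes values in the discrete set $\frac{1}{2}+\mathbb{Z}$ (up to the factor $H^3$), whereas the coefficients of $Z_G^{\dag}$ itself are irrational, so its image is dense and the standard chain-condition arguments of~\cite{Brs1},~\cite{Brs2} do not apply to the second tilt. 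If you were to pursue this programme further, that is where a new idea is needed.
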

The definition of the
central charge $Z_G$ 
first appeared in~\cite{Wal}. 
It
is more precisely 
written as follows: 
since $P^i$ are free $A$-modules of finite rank, 
they are written as 
\begin{align*}
P^i \cong 
\bigoplus_{j=1}^{m} A(n_{j}^{i}), 
\quad n_{j}^{i} \in \mathbb{Z}.
\end{align*}
Then (\ref{Z_G}) is written as 
\begin{align*}
Z_G(P^{\bullet})=
\sum_{j=1}^{m} \left( e^{2 n_{j}^{0} \pi \sqrt{-1}/d} - e^{2 n_{j}^{1} 
\pi \sqrt{-1} /d} \right). 
\end{align*}
So far 
Conjecture~\ref{conj:main} is proved 
when 
$n=1$~\cite{Tak}, $d<n=3$~\cite{KST1}, 
and $n\le d \le 4$~\cite{TGep}.
The most important unproven 
case is when $n=d=5$,
in which the variety $X$ is a quintic Calabi-Yau 3-fold.

\subsection{Orlov's theorem}
We recall Orlov's theorem~\cite{Orsin}
relating 
the triangulated category $\HMF(W)$ with 
the derived category of coherent sheaves on the
smooth projective variety
\begin{align}\label{DMW}
X \cneq (W=0) \subset \mathbb{P}^{n-1}. 
\end{align}
We only use the results
for $d=n$ case, i.e. 
$X$ is a Calabi-Yau manifold, 
and $d=n+1$ case, i.e. $X$ is general type.  
\begin{thm}\emph{(\cite[Theorem~2.5]{Orsin},~\cite[Proposition~5.8]{BFK})}\label{thm:Orlov}
If $d=n$, there is 
an equivalence of triangulated categories
\begin{align*}
\Psi \colon D^b \Coh(X) \stackrel{\sim}{\to} \HMF(W)
\end{align*}
such that the following diagram commutes: 
\begin{align*}
\xymatrix{
D^b \Coh(X) \ar[r]^{\Psi} \ar[d]_{F} & \HMF(W) \ar[d]^{\tau} \\
D^b \Coh(X) \ar[r]^{\Psi} & \HMF(W). 
}
\end{align*}
Here $F$ is the autoequivalence given by 
$F=\mathrm{ST}_{\oO_X} \circ \otimes \oO_X(1)$. 
\end{thm}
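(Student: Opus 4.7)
\emph{Proof plan.} The strategy is to exhibit $\Psi$ as the composition of two standard equivalences and then track how the grade shift functor is carried across.

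First, I would invoke the Buchweitz--Orlov equivalence
\begin{align*}
\HMF(W) \simeq D^{\mathrm{gr}}_{\mathrm{sg}}(R), \qquad R \cneq A/W,
\end{align*}
where $D^{\mathrm{gr}}_{\mathrm{sg}}(R)$ is the Verdier quotient of $D^b(\grr R)$ (bounded derived category of finitely generated graded $R$-modules) by the subcategory of perfect complexes. Under this identification, the autoequivalence $\tau$ of $\HMF(W)$ corresponds to the grade shift $(1)$ on graded $R$-modules.

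Second, I would run Orlov's theorem for graded Gorenstein rings. The hypersurface ring $R$ has Gorenstein parameter $a = d - n$, and for each integer $i$ there is a fully faithful functor $\Phi_i \colon D^b\Coh(X) \hookrightarrow D^b(\grr R)$ built from the sheafification/global-sections adjunction for $\Proj R = X$, with semi-orthogonal complement a bounded collection of twists of $R$ itself, the precise form depending on the sign of $a$. When $a = 0$, i.e.\ $d = n$, these complementary pieces vanish, so $\Phi_i$ descends to an equivalence
\begin{align*}
\overline{\Phi}_i \colon D^b\Coh(X) \xrightarrow{\sim} D^{\mathrm{gr}}_{\mathrm{sg}}(R).
\end{align*}
Composing $\overline{\Phi}_0$ with the Buchweitz--Orlov equivalence defines $\Psi$.

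Third, and this is the decisive step, I would compute the autoequivalence of $D^b\Coh(X)$ obtained by conjugating the grade shift through $\Psi$. The functors $\Phi_i$ and $\Phi_{i+1}$ differ by a mutation through the subcategory generated by $R$, i.e.\ by the unit object whose image in $D^b\Coh(X)$ is $\oO_X$. On the quotient $D^{\mathrm{gr}}_{\mathrm{sg}}(R)$ the grade shift $(1)$ intertwines $\overline{\Phi}_0$ with $\overline{\Phi}_1$, so after transporting back through $\overline{\Phi}_0$ the grade shift is realized as the operation that first twists by $\oO_X(1)$ (accounting for the shift in module grading) and then performs a mutation against $\oO_X$. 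In the Calabi--Yau case $d=n$ the object $\oO_X$ is spherical since $\omega_X \cong \oO_X$, and the mutation against it is by definition the Seidel--Thomas twist $\mathrm{ST}_{\oO_X}$. This yields
\begin{align*}
\Psi \circ F \cong \tau \circ \Psi, \qquad F = \mathrm{ST}_{\oO_X} \circ \otimes \oO_X(1),
\end{align*}
which is the commutativity in the diagram.

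The main obstacle is this third step: unwinding Orlov's mutation to produce the spherical twist $\mathrm{ST}_{\oO_X}$ on the nose, with correct conventions. The first two steps are essentially citations, but tracking the twist requires a careful identification of the mutation triangle with the defining triangle of $\mathrm{ST}_{\oO_X}$, and matching the degree conventions (in particular checking the internal consistency $F^{\times d} = [2]$ forced by (\ref{taud}), which holds because iterating the twist-and-shift $d$ times gives a composition canonically isomorphic to the double shift via Serre duality on $X$). I would carry this out by following the detailed mutation calculus in~\cite{BFK}, adapted to the Calabi--Yau value $a=0$.
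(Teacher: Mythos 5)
Your plan is a faithful reconstruction of the argument in the references the paper cites for this statement: the paper itself gives no proof beyond pointing to Orlov's theorem and to \cite[Proposition~5.8]{BFK}, which is exactly your third step (transporting the grade shift through Orlov's functors $\Phi_i$ and identifying the resulting mutation past the image of the free module with $\mathrm{ST}_{\oO_X}\circ\otimes\oO_X(1)$ in the Calabi--Yau case $a=d-n=0$). So the proposal is correct and follows essentially the same route as the paper's (cited) proof.
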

Recall that $\mathrm{ST}_{\oO_X}$ is the 
Seidel-Thomas twist~\cite{ST}, given by
\begin{align*}
\mathrm{ST}_{\oO_X}(\ast) =
\mathrm{Cone}(\dR \Hom(\oO_X, \ast) \otimes \oO_X \to 
\ast). 
\end{align*}

\begin{thm}\emph{(\cite[Theorem~2.5]{Orsin},~\cite[Proposition~3.22]{TGep})}\label{thm:Orlov2}
If $d=n+1$, then there is a 
fully faithful functor
\begin{align*}
\Psi \colon D^b \Coh(X) \hookrightarrow \HMF(W)
\end{align*}
such that we have the semiorthogonal decomposition
\begin{align*}
\HMF(W)= \langle \mathbb{C}(0), \Psi D^b \Coh(X) \rangle
\end{align*}
where $\mathbb{C}(0)$ is a certain exceptional object. 
Moreover the subcategory
\begin{align*}
\aA_W \cneq \langle \mathbb{C}(0), \Psi \Coh(X) \rangle_{\rm{ex}}
\end{align*} 
is the heart of a bounded t-structure on $\HMF(W)$,
and there is an equivalence of abelian categories
\begin{align*}
\Theta \colon \mathrm{Syst}(X) \stackrel{\sim}{\to} \aA_W. 
\end{align*}
Here $\mathrm{Syst}(X)$ is the abelian category of coherent 
systems on $X$. 
\end{thm}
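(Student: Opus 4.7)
The plan is to deduce the semiorthogonal decomposition from Orlov's general type theorem, glue t-structures across this decomposition to realize $\aA_W$ as a heart, and then identify coherent systems with extensions in $\aA_W$. First I would invoke Orlov's theorem~\cite[Theorem~2.5]{Orsin} in the general type case $d > n$: the functor $\Psi \colon D^b\Coh(X) \hookrightarrow \HMF(W)$ is fully faithful, and its orthogonal complement is generated by $d - n$ exceptional objects $\mathbb{C}(0), \ldots, \mathbb{C}(d-n-1)$ arising from the graded matrix factorizations of the residue field at the origin with appropriate shifts. Specializing to $d = n + 1$ leaves only $\mathbb{C}(0)$, yielding $\HMF(W) = \langle \mathbb{C}(0), \Psi D^b\Coh(X) \rangle$.

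For the heart, I would combine the t-structure on $\Psi D^b\Coh(X)$ transported from the standard one on $D^b\Coh(X)$ (with heart $\Psi\Coh(X)$) with the tautological t-structure on $\langle \mathbb{C}(0) \rangle_{\rm{ex}}$, and apply Beilinson's gluing construction for t-structures along the SOD. This produces a bounded t-structure on $\HMF(W)$ whose heart is the extension closure $\aA_W$, provided that the vanishings $\Hom^{<0}_{\HMF(W)}(\mathbb{C}(0), \Psi(E)) = 0$ hold for all $E \in \Coh(X)$, which I would verify by unwinding Orlov's functor.

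For the equivalence $\Theta$, I would first identify $\Hom^*_{\HMF(W)}(\mathbb{C}(0), \Psi(E))$ functorially with the cohomology of $E$ on $X$, so that $H^0(E)$ appears naturally as extension data gluing $\mathbb{C}(0)$ and $\Psi(E)$ inside $\aA_W$. Given a coherent system $(V \hookrightarrow H^0(E))$, the inclusion $V$ determines a canonical extension of $V \otimes \mathbb{C}(0)$ and $\Psi(E)$ in $\aA_W$, and I would set $\Theta(V, E)$ to be this extension, assembling into a functor by obvious functoriality of the construction. Essential surjectivity and fully faithfulness of $\Theta$ then follow from the canonical SOD triangle applied to an arbitrary $A \in \aA_W$: it decomposes uniquely as such an extension, and the pair $(V, E)$ is recovered from the $\Psi$-component and the extension class, with exactness and additivity inherited from the abelian structure on $\aA_W$.

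The main obstacle will be the explicit Hom and Ext computations between $\mathbb{C}(0)$ and $\Psi(E)$ in $\HMF(W)$. Both the gluing of t-structures (through the negative Ext vanishing) and the identification of extension data with coherent system data rest on these computations, and they require a careful analysis of Orlov's functor applied to a Koszul-type resolution of the residue field, together with Serre-duality input on the hypersurface $X$.
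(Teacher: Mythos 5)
The paper does not prove this statement at all: it is quoted as background, with the semiorthogonal decomposition attributed to \cite[Theorem~2.5]{Orsin} and the heart $\aA_W$ together with the equivalence $\Theta$ attributed to \cite[Proposition~3.22]{TGep}. Your outline is, as far as I can tell, a faithful reconstruction of the strategy of those references: Orlov's general-type theorem gives the fully faithful $\Psi$ with $d-n$ exceptional objects in the complement (one object, $\mathbb{C}(0)$, when $d=n+1$); the heart is obtained by combining the standard heart on $\Psi D^b\Coh(X)$ with $\langle \mathbb{C}(0)\rangle_{\rm ex}$ across the decomposition; and the identification with $\mathrm{Syst}(X)$ comes from computing $\Hom^{*}_{\HMF(W)}(\mathbb{C}(0), \Psi(F))$ in terms of cohomology of $F$, which is exactly where the real work lies. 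Two small corrections. First, the vanishing you need for $\langle \mathbb{C}(0), \Psi\Coh(X)\rangle_{\rm ex}$ to be a heart is $\Hom^{\le 0}(\mathbb{C}(0), \Psi(F))=0$, not merely $\Hom^{<0}=0$: a nonzero degree-zero $\Hom$ from the left factor to the right factor already destroys the heart property (compare $\langle \oO, \oO(1)\rangle_{\rm ex}$ in $D^b(\mathbb{P}^1)$, which is not a heart). In the situation at hand one in fact has $\Hom^{\le 0}(\mathbb{C}(0),\Psi(F))=0$ and $\Ext^1(\mathbb{C}(0),\Psi(F))\cong H^0(X,F)$, which is what makes everything work. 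Second, the coherent systems of this paper are arbitrary morphisms $V\otimes\oO_X\to F$, not inclusions $V\hookrightarrow H^0(F)$; correspondingly the extension classes in $\Hom\bigl(V, \Ext^1(\mathbb{C}(0),\Psi(F))\bigr)$ are arbitrary linear maps, so you should drop the injectivity in your description of $\Theta$.
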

Recall that a coherent system 
on $X$ consists of data 
\begin{align*}
V\otimes \oO_X \stackrel{s}{\to} F
\end{align*}
where $V$ is a finite dimensional $\mathbb{C}$-vector 
space, $F \in \Coh(X)$ and 
$s$ is a morphism in $\Coh(X)$.
The set of morphisms in $\mathrm{Syst}(X)$
is given by the 
commutative diagrams in $\Coh(X)$
\begin{align*}
\xymatrix{
V\otimes \oO_X \ar[r]^{s} \ar[d] & F \ar[d] \\
V'\otimes \oO_X \ar[r]^{s'} & F'. 
}
\end{align*}
The equivalence $\Theta$ sends
$(\oO_X \to 0)$ to $\mathbb{C}(0)$ and 
$(0 \to F)$ for 
$F \in \Coh(X)$ to $\Psi(F) \in \aA_W$.

\section{Stronger BG inequality for quintic 3-folds}
In this section, we take $W$ to be a quintic 
homogeneous polynomial with five variables
\begin{align}\label{poly:W}
W \in \mathbb{C}[x_0, x_1, x_2, x_3, x_4], \quad \deg(W)=5. 
\end{align}
The variety 
\begin{align*}
X \cneq (W=0) \subset \mathbb{P}^4
\end{align*}
 is a smooth 
quintic Calabi-Yau 3-fold. 
This is the most interesting case in the study of 
Conjecture~\ref{conj:main}.
We have an equivalence by Theorem~\ref{thm:Orlov}
\begin{align}\label{Or:quin}
\Psi \colon D^b \Coh(X) \stackrel{\sim}{\to} \HMF(W). 
\end{align}
The goal of this section is to 
translate Conjecture~\ref{conj:main} 
in terms of $D^b \Coh(X)$, and relate it to a stronger 
version of BG inequality for stable 
sheaves on $X$.

\subsection{Stringy K\"ahler moduli space of a quintic 3-fold}
Let us first
recall a 
mirror family of a quintic 3-fold $X$ and 
its 
stringy K\"ahler moduli space. 
The mirror family of $X$ is 
a simultaneous crepant resolution $\widehat{Y}_{\psi}
\to Y_{\psi}$
of the 
following one parameter family of 
quotient varieties~\cite{COGP}: 
\begin{align*}
Y_{\psi} \cneq \left\{ \sum_{i=0}^{5}
y_i^5 - 5 \psi \prod_{i=0}^5 y_i =0  \right\} / 
G.  
\end{align*}
Here $[y_1 \colon y_2 \colon y_3 \colon y_4 \colon y_5]$ is the homogeneous 
coordinate of $\mathbb{P}^4$, and 
$G=(\mathbb{Z}/5\mathbb{Z})^3$
acts on $\mathbb{P}^4$ by 
\begin{align*}
\xi \cdot [y_1 \colon y_2 \colon y_3 \colon y_4 \colon y_5]
= [\xi_1 y_1 \colon \xi_2 y_2 \colon \xi_3 y_3 \colon
 \xi_1^{-1} \xi_2^{-1} \xi_3^{-1} y_4 \colon 1] 
\end{align*}
for $\xi=(\xi_i)_{1\le i\le 3} \in G$.
Let $\alpha$ be the root of unity
\begin{align*}
\alpha \cneq e^{2\pi \sqrt{-1}/5}.
\end{align*}
 Note that we have the isomorphism
\begin{align}\label{isom:mirror}
\widehat{Y}_{\psi} \stackrel{\cong}{\to}
\widehat{Y}_{\alpha \psi}
\end{align}
by $y_i \mapsto y_i$ for $1\le i\le 4$
and $y_5 \mapsto \alpha y_5$. 
Also $\widehat{Y}_{\psi}$ is a non-singular 
Calabi-Yau 3-fold if and only if $\psi^5 \neq 1$.
Hence the mirror family $\widehat{Y}_{\psi}$ is parametrized
by the following quotient stack  
(see Figure~\ref{fig:one}) 
\begin{align*}
\mM_{K} \cneq \left[ \frac{\{ \psi \in \mathbb{C} :
\psi^5 \neq 1 \} }{\mu_5}  \right]
\end{align*}
where the generator of $\mu_5$
acts on $\mathbb{C}$ by the multiplication of 
$\alpha$. 
The stack $\mM_K$ is called the 
stringy K$\ddot{\rm{a}}$hler moduli 
space of $X$. 
We see that there 
 are 3-special points in 
Figure~\ref{fig:one}:
\begin{itemize}
\item The point $\psi^5=\infty$, called
\textit{
Large volume limit}. 
\item The point $\psi^5=1$, 
called \textit{Conifold point}. 
\item The point $\psi^5=0$, 
called \textit{Gepner point}.  
\end{itemize}
The mirror variety
$\widehat{Y}_{\psi}$
is non-singular 
except at the first two special 
points. It is also non-singular 
at the Gepner point, but 
there admits a non-trivial
$\mathbb{Z}/5\mathbb{Z}$-action
 by the isomorphism (\ref{isom:mirror}).

\subsection{Relation to Bridgeland stability}
We discuss a relationship between 
the space 
$\mM_K$ 
and the Bridgeland's space 
\begin{align*}
\Stab(X) \cneq \Stab(D^b \Coh(X))
\end{align*}
based on the papers~\cite{Asp2}, \cite{Brs6}. 
Let $\Auteq(X)$ be
the group of autoequivalences of $D^b \Coh(X)$. 
It is expected that there is
an embedding
\begin{align}\label{I}
I \colon \mM_K \hookrightarrow
\left[ \Auteq(X) \backslash
\Stab(X) / \mathbb{C} \right] 
\end{align}
such that, if we write 
\begin{align*}
I(\psi)=(Z_{\psi}, \{\pP_{\psi}(\phi)\}_{\phi \in \mathbb{R}})
\end{align*}
then 
the central charge $Z_{\psi}(E)$
for $E \in D^b \Coh(X)$ 
is a solution of the Picard-Fuchs (PF) equation which
the period integrals of the mirror family 
$\widehat{Y}_{\psi}$ should satisfy. 
Using the following notation
\begin{align*}
z \cneq 5^{-5} \psi^{-5}, \quad 
\theta_z \cneq z \frac{d}{dz}
\end{align*} the PF 
equation is given by 
\begin{align}\label{PF}
\theta_z^4 \Phi  -5z (5\theta_z +1)(5\theta_z +2)(5\theta_z +3)(5\theta_z +4)
\Phi =0. 
\end{align}
The solution space of the above PF equation is 
known to be four dimensional. 
In the $\psi$-variable, the basis is given by
(cf.~\cite{COGP})
\begin{align*}
\varpi_j(\psi) \cneq -\frac{1}{5}
\sum_{m=1}^{\infty}
\frac{\Gamma(m/5)}{\Gamma(m)\Gamma(1-m/5)^4}(5\alpha^{2+j}\psi)^m
\end{align*}
for $0\le j\le 3$. 
For an object $E \in D^b \Coh(X)$, the 
central charge $Z_{\psi}(E)$ 
should satisfy the above PF equation, hence 
is written as 
\begin{align*}
Z_{\psi}(E)= \sum_{i=0}^{3} \Phi_i(\psi) \cdot H^{3-i}\ch_i(E)
\end{align*}
where $H \cneq c_1(\oO_X(1))$ and 
$\Phi_i(\psi)$ is a linear combination of the basis 
$\{\varpi_j(\psi)\}_{0\le j\le 1}$
which is independent of $E$. 
Here we have identified $H^6(X, \mathbb{Q})$ with $\mathbb{Q}$ via
the integration map. 
On the other hand, around the large volume limit and the 
conifold point, the monodromy transformations 
induce linear isomorphisms $M_L$, $M_C$
on the solution space of the PF equation (\ref{PF}). 
Hence that monodromy
transformations act on the central charge
$Z_{\psi}(E)$, 
which are expected to coincide with 
the actions of autoequivalences 
$\otimes \oO_X(1)$, $\mathrm{ST}_{\oO_X}$ respectively. 
Namely we should have the following identities:
\begin{align*}
&Z_{\psi}(E\otimes \oO_X(1))=
\sum_{i=0}^3 M_L \Phi_i(\psi) \cdot H^{3-i}\ch_i(E) \\
&Z_{\psi}(\mathrm{ST}_{\oO_X}(E))=
\sum_{i=0}^3 M_C \Phi_i(\psi) \cdot H^{3-i} \ch_i(E). 
\end{align*}
The coefficients of $\Phi_i(\psi)$ are uniquely 
determined by the above matching property of the 
monodromy transformations on both sides of (\ref{I}). 

Indeed, the above idea is used to 
give an embedding similar to (\ref{I}) 
when $X$ is the local projective plane in~\cite{BaMa}.
In the quintic 3-fold case, based on a similar idea as above, 
the central 
charges $Z_{\psi}(E)$ for line bundles $E=\oO_X(m)$
are computed by Aspinwall~\cite[Equation~(217)]{Asp2}:
\begin{align*}
Z_{\psi}(\oO_X(m))=&
\frac{1}{6}(5m^3 + 3m^2 + 16m +6) \varpi_0(\psi) \\
&-\frac{1}{2}(3m^2 + 3m+ 2) \varpi_1(\psi) -m^2 \varpi_2(\psi)
 -\frac{1}{2} m(m-1)
\varpi_3(\psi). 
\end{align*}
Since $e^{mH}$ for $m\in \mathbb{Z}$ span $H^{\rm{even}}(X, \mathbb{Q})$, the 
above formula uniquely determines $\Phi_i(\psi)$.
A direct computation shows that 
\begin{align*}
\Phi_0(\psi)&=\frac{1}{5}(\varpi_0(\psi)-\varpi_0(\psi)) \\
\Phi_1(\psi)&=\frac{1}{30}(16\varpi_0(\psi)-9\varpi_1(\psi)+ 
3\varpi_3(\psi)) \\
\Phi_2(\psi)&= \frac{1}{5}(\varpi_0(\psi)-3\varpi_1(\psi)-2\varpi_2(\psi)-\varpi_3(\psi)) \\
\Phi_3(\psi)&=\varpi_0(\psi). 
\end{align*}
As a 
result, $Z_{\psi}(E)$ is written as
\begin{align}\notag
&(\varpi_0(\psi)-\varpi_1(\psi)) \ch_0(E) 
+ \frac{1}{30} \left( 16 \varpi_0(\psi) -9\varpi_1(\psi)
+ 3 \varpi_3(\psi) \right) H^2\ch_1(E) \\
\notag
&+ \frac{1}{5}\left(\varpi_0(\psi) - 3\varpi_1(\psi) -2\varpi_2(\psi) - 
\varpi_3(\psi)  \right) 
H \ch_2(E) + \varpi_0(\psi) \ch_3(E). 
\end{align}

\subsection{Gepner point and Gepner type stability conditions}
Let us consider 
a conjectural stability condition
$\sigma_G \in \Stab(X)$ satisfying 
\begin{align*}
[\sigma_G]=
I(\psi^5=0)
 \in \left[ \Auteq(X) \backslash \Stab(X)/ \mathbb{C} \right]
\end{align*}
where $I$ is an expected embedding (\ref{I}).
Since the point 
$\psi^5 =0$ (Gepner point) in $\mM_K$ is an orbifold point 
with stabilizer group $\mathbb{Z}/5\mathbb{Z}$, 
the stability condition $\sigma_G$ should also 
have the stabilizer group $\mathbb{Z}/5\mathbb{Z}$
with respect to the
$\Auteq(X) \times \mathbb{C}$ action on $\Stab(X)$. 
Under a suitable choice of $\sigma_G$, 
the generator of the above stabilizer group
should be given by
\begin{align}\label{ST:stab}
\left( \mathrm{ST}_{\oO_X} \circ \otimes \oO_X(1), -\frac{2}{5} \right)
 \in \Auteq(X) \times \mathbb{C}
\end{align}
since the action of 
$\mathrm{ST}_{\oO_X} \circ \otimes \oO_X(1)$
on $H^{\rm{even}}(X, \mathbb{Q})$
corresponds to the 
composition of monodromy transformations
 at the large volume limit and the 
conifold point under the embedding (\ref{I}), and 
the five times composition of
$\mathrm{ST}_{\oO_X} \circ \otimes \oO_X(1)$
coincides with $[2]$. (This is a consequence of Theorem~\ref{thm:Orlov} 
and the identity (\ref{taud}).)
The property of $\sigma_G$ fixed by (\ref{ST:stab})
is nothing but the Gepner type property with 
respect to $(\mathrm{ST}_{\oO_X} \circ \otimes \oO_X(1), 2/5)$. 
By the above argument and Theorem~\ref{thm:Orlov}, 
a stability condition corresponding to the Gepner point
gives a desired stability condition in 
Conjecture~\ref{conj:main}
via Orlov equivalence (\ref{Or:quin}).  

As for the central charge at the Gepner point, we 
consider the normalized central charge $Z_G^{\dag}$
so that $Z_G^{\dag}(\oO_x)=-1$ holds for any $x\in X$. 
Under this normalization, $Z_G^{\dag}$ is given by 
\begin{align*}
Z_G^{\dag}(E) \cneq & \lim_{\psi \to 0} 
-Z_{\psi}(E)/ \varpi_0(\psi) \\
=& -\ch_3(E) + \frac{1}{5}(\alpha^3 + 2\alpha^2 + 3\alpha -1) H \ch_2(E) \\
& + \frac{1}{30}(-3\alpha^3 +9\alpha -16) H^2 \ch_1(E) + (\alpha-1) \ch_0(E).
\end{align*}
Indeed, the coefficients $\alpha_j^{\dag} \in \mathbb{C}H^{3-j}$ of 
$Z_G^{\dag}(E)$ at 
$\ch_j(E)$ are checked to form the unique solution of the linear equation 
\begin{align*}
(\alpha_0^{\dag}, \cdots, \alpha_3^{\dag})
\cdot M = \alpha \cdot (\alpha_0^{\dag}, \cdots, \alpha_3^{\dag}), \quad 
\alpha_3^{\dag}=-1
\end{align*}
where $M$ is given by the composition of matrices
(cf.~\cite[Subsection~4.1]{TGep})
\begin{align}\notag
M\cneq  
\left( \begin{array}{cccc}
1 & -(\mathrm{td}_X)_2 & 0 & -1 \\
0 & 1 & 0 & 0 \\
0 & 0 & 1 & 0 \\
0 & 0 & 0 & 1
\end{array} \right)
\left( \begin{array}{cccc}
1 & 0& 0 & 0 \\
H & 1 & 0 & 0 \\
H^2/2 & H & 1 & 0 \\
H^3/6 & H^2/2 & H & 1
\end{array} \right). 
\end{align}
Here $(\mathrm{\td}_X)_2=5H^2/6$ is the 
$H^{2, 2}(X)$-component of $\td_X$. 
The above matrix $M$
induces the isomorphism on $H^{\rm{even}}(X)$, which 
is identified 
with the action of $\mathrm{ST}_{\oO_X} \circ \otimes \oO_X(1)$
on it. By~\cite[Proposition~4.4]{TGep}, 
the central charge $Z_G^{\dag}$ is related to 
the central charge $Z_G$ on $\HMF(W)$ given by (\ref{Z_G}) 
as 
\begin{align*}
Z_G(\Psi(E))= -(1-\alpha)^4 \cdot Z_G^{\dag}(E)
\end{align*}
for any $E \in D^b \Coh(X)$. 
Here $\Psi$ is the equivalence (\ref{Or:quin}). 
By 
applying $\mathbb{C}$-action on $\Stab(X)$, 
Conjecture~\ref{conj:main} 
for the polynomial (\ref{poly:W})
leads to the following conjecture: 
\begin{conj}\label{conj:quintic}
Let $X \subset \mathbb{P}^4_{\mathbb{C}}$ be a smooth quintic 3-fold, 
$H\cneq c_1(\oO_X(1))$ and $\alpha \cneq e^{2\pi \sqrt{-1}/5}$. 
Then there is a Gepner type stability condition 
\begin{align}\label{Gep:quintic}
(Z_G^{\dag}, \{\pP_G^{\dag}(\phi)\}_{\phi \in \mathbb{R}}) \in \Stab(X)
\end{align}
with respect to $(\mathrm{ST}_{\oO_X} \circ \otimes \oO_X(1), 2/5)$, 
whose central charge $Z_G^{\dag}$ is given by 
\begin{align*}
Z_G^{\dag}(E) 
=& -\ch_3(E) + \frac{1}{5}(\alpha^3 + 2\alpha^2 + 3\alpha -1) H \ch_2(E) \\
& + \frac{1}{30}(-3\alpha^3 +9\alpha -16) H^2 \ch_1(E) + (\alpha-1) \ch_0(E).
\end{align*}
\end{conj}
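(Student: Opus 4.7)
The plan is to follow the double-tilt strategy of Bayer--Macri--Toda~\cite{BMT}, adapted to the specific central charge $Z_G^\dag$. First I would expand the real and imaginary parts of $Z_G^\dag$ using $\alpha = e^{2\pi\sqrt{-1}/5}$. Since $Z_G^\dag$ depends on $\ch_3$ only through its real part, $\Imm Z_G^\dag(E)$ is a linear combination of $H\ch_2(E)$, $H^2\ch_1(E)$ and $\ch_0(E)$ with coefficients in $\mathbb{R}\cap\mathbb{Q}(\alpha)$. By the eigenvector calculation already carried out in the excerpt, the coefficients of $Z_G^\dag$ are forced to be the unique $\alpha$-eigenvector of the matrix $M$ with $\alpha_3^\dag=-1$; this rigidifies the entire construction and leaves essentially no freedom in the choice of heart.

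Next, I would first tilt $\Coh(X)$ with respect to the standard $H$-slope $\mu_H$ at a value $\beta$ determined by the structure of $Z_G^\dag$, producing a heart $\bB \subset D^b\Coh(X)$. On $\bB$ I would define a tilt-slope function $\nu$ built from $Z_G^\dag$ (in the manner of~\cite{BMT}), and perform a second tilt to obtain a heart $\aA \subset D^b\Coh(X)$. Well-definedness of $\nu$ amounts to a first-order BG-type inequality for $\mu_H$-semistable sheaves on $X$, which follows from the classical Bogomolov--Gieseker theorem once the sign of the relevant quadratic form is checked.

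The critical step is then to verify Bridgeland's positivity axiom: $\Imm Z_G^\dag(E) \geq 0$ for $E\in\aA$ is automatic by construction, so one needs $\Ree Z_G^\dag(E) < 0$ whenever $\Imm Z_G^\dag(E)=0$ and $E\neq 0$. A standard reduction via Harder--Narasimhan and Jordan--H\"older with respect to $\nu$ shows that this inequality on $\nu$-semistable objects of the critical phase reduces to an inequality for $\mu_H$-stable torsion-free sheaves of normalized slope $c_1/\rank = -H/2$. For such an $E$, expanding $\Ree Z_G^\dag(E)$ and using $H^2\ch_1(E)= -\rank(E)\cdot H^3/2$ produces a linear combination of $\rank(E)^2$ and $\Delta(E)\cdot H$; demanding strict negativity becomes precisely the inequality (\ref{intro:bound}) of Conjecture~\ref{intro:strong}, with the irrational constant $1.5139\cdots$ forced by the coefficients of the $\alpha$-eigenvector of $M$. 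This is the heart of the difficulty: one must refine the classical bound $\Delta(E)\cdot H \geq 0$ to this sharper arithmetic inequality, and at present only the rank-two case is settled.

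Granted Conjecture~\ref{intro:strong}, the remaining Bridgeland axioms --- the Harder--Narasimhan property and the support property --- would follow from the techniques of~\cite{BMT}. The Gepner property with respect to $(\mathrm{ST}_{\oO_X}\circ\otimes\oO_X(1), 2/5)$ holds at the level of central charges by the eigenvector calculation, and the compatible lift to slicings uses the identity $(\mathrm{ST}_{\oO_X}\circ\otimes\oO_X(1))^{\circ 5}=[2]$ coming from Theorem~\ref{thm:Orlov} and (\ref{taud}): the autoequivalence must permute the slicing $\pP_G^\dag$ consistently with phase rotation by $2/5$, and uniqueness of the tilted heart $\aA$ within its $\mathbb{C}$-orbit makes the identification automatic. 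The main obstacle throughout is thus entirely concentrated in the strong BG inequality in arbitrary rank, and no classical discriminant argument alone appears sufficient to establish it.
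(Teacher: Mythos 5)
First, note that the statement you are asked to prove is itself a conjecture: the paper does not prove Conjecture~\ref{conj:quintic}, it only \emph{derives} the shape of $Z_G^{\dag}$ (the $\alpha$-eigenvector computation for $M$, the limit of $-Z_{\psi}/\varpi_0$) and then proposes a conjectural construction of a compatible heart. Your overall route --- double tilt of $\Coh(X)$ \`a la \cite{BMT}, with the whole difficulty concentrated in a strong BG inequality for slope-stable sheaves with $c_1/\rank=-H/2$ --- is indeed the paper's route, and your identification of the constant $1.5139\cdots$ with the coefficients of the eigenvector is correct in spirit. However, you misplace where Conjecture~\ref{strong} enters. In the paper it is \emph{not} used to force $\Ree Z_G^{\dag}(E)<0$ on the critical phase (the analogue of the $\ch_3$-inequality of \cite{BMT}); it is used one step earlier, to establish the second item of Lemma~\ref{lem:property}, namely that $E\in\bB_{B,H}$ with $H^2\ch_1^B(E)=0$ satisfies $\Imm Z_G^{\dag}(E)\ge 0$. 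Since $\Imm Z_G^{\dag}=b\,H\ch_2^B+c\,\ch_0^B$ and $\Delta(E)\cdot H/\rank(E)^2=-2H\ch_2^B(E)/\rank(E)$ when $\ch_1^B(E)=0$, this is exactly the bound $\Delta(E)\cdot H/\rank(E)^2\ge 2c/b$, with strictness for free from irrationality. This inequality is what makes $\nu_G$ satisfy the weak see-saw property and lets one perform the second tilt at all; your claim that well-definedness of the tilt-slope ``follows from the classical Bogomolov--Gieseker theorem'' is therefore not right --- classical BG only gives $\Delta(E)\cdot H\ge 0$, and the whole point is that one needs the sharper constant $2c/b$. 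Conversely, once $\aA_G$ is built this way, the positivity axiom (\ref{positivity}) including the real-part condition is automatic from the construction and the irrationality of $Z_G^{\dag}$, as the paper's remark states; no separate $\ch_3$-type inequality is invoked.

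The more serious overclaim is your final paragraph. You assert that, granted Conjecture~\ref{intro:strong}, the Harder--Narasimhan and support properties ``would follow from the techniques of \cite{BMT}'' and that the Gepner property of the slicing is ``automatic.'' The paper explicitly denies this: the HN property for the intermediate $\nu_G$-stability does go through because $H^2\ch_1^B(\ast)$ takes discrete values, but the HN property for the final pair $(Z_G^{\dag},\aA_G)$ is left open precisely because the image of $Z_G^{\dag}$ is dense (its coefficients lie in $\mathbb{Q}(\alpha,\sqrt{-1})$ and are irrational), so the discreteness/Noetherianity arguments of \cite{BMT} and \cite{Brs2} do not apply. This is why the paper's endpoint is Conjecture~\ref{conj:const}, not a theorem, even modulo Conjecture~\ref{strong}. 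Likewise, the statement that $\mathrm{ST}_{\oO_X}\circ\otimes\oO_X(1)$ preserves $\aA_G$ up to the shift by $2/5$ is nowhere established; only the eigenvector identity at the level of central charges is verified. So your proposal is structurally faithful to the paper but presents as routine two steps (HN/support, and the categorical Gepner property) that the paper identifies as genuinely open even after assuming the strong BG inequality.
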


\subsection{Some observations}
Let us try to construct a desired stability condition in 
Conjecture~\ref{conj:quintic}. 
By~\cite[Proposition~5.3]{Brs1}, 
giving data (\ref{Gep:quintic}) is equivalent to 
giving the heart of a bounded t-structure 
\begin{align}\notag
\aA_G \subset D^b \Coh(X)
\end{align}
 satisfying 
\begin{align}\label{positivity}
Z_G^{\dag}(\aA_G \setminus \{0\}) 
\subset \{ r \exp(\sqrt{-1} \pi \phi) :
r>0, \phi \in (0, 1] \}
\end{align}
and any object $E \in \aA_G$ admits 
a Harder-Narasimhan filtration with respect to the
$Z_G^{\dag}$-stability. 
We propose that
a desired heart $\aA_G$ is constructed as 
a double tilting of $\Coh(X)$, similar to the one 
in~\cite{BMT}. 
This is motivated by the following observations:

Firstly in~\cite{TGep}, 
we constructed a Gepner type stability condition
for a quartic K3 surface $S$
via a tilting of $\Coh(S)$. 
The construction is similar to the 
one near the large volume limit in~\cite{Brs2}, \cite{AB}.
A different point is that, although 
we only need a classical BG inequality 
to construct a stability condition
near the
large volume limit, 
a construction at the Gepner point requires 
a stronger version of BG inequality
given as follows: 
\begin{lem}
Let $S$ be a K3 surface and $E$ 
a torsion free stable sheaf $E$ on $S$
with $\rank(E) \ge 2$.
Then 
we have the following inequality
\begin{align}\label{strong:K3}
\frac{\Delta(E)}{\rank(E)^2} 
\ge 2- \frac{2}{\rank(E)^2} \ge \frac{3}{2}. 
\end{align}
\end{lem}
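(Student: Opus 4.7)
The plan is to deduce the inequality directly from Hirzebruch--Riemann--Roch on the K3 surface $S$, combined with Serre duality and the simplicity of a torsion-free stable sheaf. The two key ingredients are an explicit formula for $\chi(E,E)$ in terms of $\rank(E)$ and $\Delta(E)$, and an upper bound on $\chi(E,E)$ obtained from vanishing of $\Hom$ groups.

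First I would compute $\chi(E,E)$ via Riemann--Roch. On a K3 surface one has $\td(S)=(1,0,2)$, and using $\ch(E^{\vee})=(\rank(E),-c_1(E),\ch_2(E))$ a direct expansion of $\int_S \ch(E^{\vee})\ch(E)\td(S)$ yields
$$\chi(E,E) = 2\rank(E)^2 - \Delta(E).$$
Next, since $E$ is torsion-free and stable it is simple, so $\Hom(E,E)=\mathbb{C}$. Combined with the triviality of $K_S$, Serre duality gives $\Ext^2(E,E) \cong \Hom(E,E)^{\vee}=\mathbb{C}$, and of course $\ext^1(E,E)\ge 0$. Hence $\chi(E,E)\le 2$, and comparing with the Riemann--Roch identity forces
$$\Delta(E) \ge 2\rank(E)^2 - 2.$$
Dividing by $\rank(E)^2$ gives the first inequality in (\ref{strong:K3}), and the second is immediate from the hypothesis $\rank(E)\ge 2$.

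There is essentially no obstacle here: every step relies only on classical facts specific to K3 surfaces, which is why the author notes that the result \emph{easily follows from Riemann--Roch theorem and Serre duality}. The only routine point worth checking is that torsion-free slope stability suffices to guarantee $\End(E)=\mathbb{C}$, which is standard. It is worth emphasizing that both special features of K3 surfaces are used crucially: the value $\int_S \td(S)=2$ produces the additive $2\rank(E)^2$ term in the Riemann--Roch formula, while $K_S=\oO_S$ supplies the nonvanishing $\Ext^2$ contribution. Without both, the improvement $2-2/\rank(E)^2$ over the classical bound $\Delta(E)\ge 0$ would disappear, which is exactly why the analogous statement is much harder on, say, quintic 3-folds.
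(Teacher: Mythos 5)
Your proof is correct and is exactly the argument the paper has in mind: the paper gives no written proof beyond noting that the lemma is an easy consequence of Riemann--Roch and Serre duality (citing Mukai), and your computation $\chi(E,E)=2\rank(E)^2-\Delta(E)$ together with $\hom(E,E)=\ext^2(E,E)=1$ and $\ext^1(E,E)\ge 0$ is the standard way to make that precise. No gaps.
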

The above lemma is an easy 
 consequence of the 
Riemann-Roch theorem and Serre duality
(cf.~\cite[Corollary~2.5]{Mu2})
and a similar improvement is not 
known for other surfaces
except Del Pezzo surfaces. 
By the above observation, we expect that 
 a desired Gepner type stability condition on a quintic 3-fold
is also constructed 
in a way similar to the one near the large volume limit, after an 
an improvement of BG inequality. 

Secondly we can rewrite the central charge $Z_G^{\dag}(E)$
in the following way: 
\begin{align}\label{rewrite}
-\ch_3^{B}(E) &+ a H^2 \ch_1^B(E) + \sqrt{-1} \left( b H \ch_2^B(E) + c \ch_0^{B}(E) \right). 
\end{align}
Here $B=-H/2$ and $\ch^{B}(E)$ is the twisted 
Chern character
\begin{align*}
\ch^B(E) \cneq e^{-B} \ch(E). 
\end{align*}
In (\ref{rewrite}), 
$a, b, c$ are some real numbers in $\mathbb{Q}(\alpha, \sqrt{-1})$, given by 
\begin{align*}
a&= -\frac{1}{5} \alpha^3 -\frac{1}{5} \alpha^2 -\frac{67}{120} \\
b\sqrt{-1}&= \frac{1}{5} \alpha^3 + \frac{2}{5} \alpha^2 + \frac{3}{5} \alpha
+ \frac{3}{10} \\
c\sqrt{-1}&=\frac{3}{8} \alpha^3 + \frac{1}{4} \alpha^2 + \frac{5}{8} \alpha 
+ \frac{5}{16}. 
\end{align*}
They are 
approximated by 
\begin{align*}
a= -0.8819 \cdots, \ b= 0.68819 \cdots, \ 
c= 0.52088\cdots. 
\end{align*}
The expression (\ref{rewrite}) is very similar to the 
central charge near the large volume limit, 
given by 
\begin{align}\notag
Z_{B, tH}(E) \cneq -\int_X e^{-\sqrt{-1} tH} \ch^{B}(E)
\end{align}
for $t\in \mathbb{R}_{>0}$. 
The above integration is expanded as 
\begin{align}\label{expand}
-\ch_3^B(E) + \frac{t^2}{2} H^2 \ch_1^B(E) 
+\sqrt{-1} \left( tH \ch_2^B(E) - \frac{5t^3}{6} \ch_0^B(E) \right). 
\end{align} 
By comparing (\ref{rewrite}) with (\ref{expand}), 
although they are in a similar form, 
we see that some signs of the coefficients 
are different. 
In~\cite{BMT}, we constructed 
a double tilting of $\Coh(X)$ 
which, together with the central charge (\ref{expand}), 
conjecturally gives a Bridgeland stability condition near 
the large volume limit. 
We propose to construct the heart $\aA_G$
via a double tilting of $\Coh(X)$
in a way similar to~\cite{BMT}, by taking the 
difference of the signs of the coefficients into consideration.

\subsection{Conjectural stronger Bogomolov-Gieseker inequality}
We imitate the argument in~\cite{BMT} to construct $\aA_G$. 
In what follows, we fix $B=-H/2$. 
Let $\mu_{B, H}$ be the twisted slope function on $\Coh(X)$ 
defined by
\begin{align*}
\mu_{B, H}(E) \cneq \frac{H^2\ch_1^B(E)}{\rank(E)}. 
\end{align*}
Here we set $\mu_{B, H}(E)=\infty$ if $E$ is a torsion sheaf. 
The above slope function defines the classical slope stability on 
$\Coh(X)$. 
We define the pair of full subcategories 
$(\tT_{B, H}, \fF_{B, H})$ of $\Coh(X)$ to be
\begin{align*}
\tT_{B, H} &\cneq \langle E : \mu_{B, H} \mbox{-semistable with }
\mu_{B, H}(E)>0 \rangle_{\rm{ex}} \\
\fF_{B, H} &\cneq \langle E : \mu_{B, H} \mbox{-semistable with }
\mu_{B, H}(E) \le 0 \rangle_{\rm{ex}}. 
\end{align*}
The above subcategories form a torsion pair in $\Coh(X)$. 
The associated tilting $\bB_{B, H}$ is defined to be
\begin{align*}
\bB_{B, H} \cneq \langle \fF_{B, H}[1], \tT_{B, H} \rangle_{\rm{ex}}. 
\end{align*}
The category $\bB_{B, H}$ is the heart of a bounded t-structure on 
$D^b \Coh(X)$. In~\cite[Lemma~3.2.1]{BMT}, it is 
observed that the central charge (\ref{expand})
satisfies the following condition:
an object $E \in \bB_{B, H}$
with $H^2 \ch_1^B(E)=0$
satisfies
$\Imm Z_{B, tH}(E) \ge 0$. 
The classical BG inequality is used to show 
the above property. 
We propose that a similar property also
holds for the central charge $Z_G^{\dag}$, i.e.
 an object $E \in \bB_{B, H}$ with 
$H^2 \ch_1^B(E)=0$ satisfies
$\Imm Z_G^{\dag}(E) \ge 0$. 
Note 
that such an object $E$ is contained in the category
\begin{align*}
\langle F[1], \Coh_{\le 1}(X) : F \mbox{ is } \mu_{B, H} \mbox{-stable with }
H^2\ch_1^B(F)=0 \rangle_{\rm{ex}}
\end{align*}
where $\Coh_{\le 1}(X)$ is the category of coherent sheaves 
$T \in \Coh(X)$ with $\dim \Supp(T) \le 1$. 
Also noting 
the equality
\begin{align*}
\Delta(E)= \ch_1^B(E)^2 -2\ch_0^B(E) \ch_2^B(E)
\end{align*}
the above requirement leads to the following 
conjecture: 
\begin{conj}\label{strong}
Let $X \subset \mathbb{P}^4$ be a smooth 
quintic 3-fold and $E$ a torsion free slope stable 
sheaf on $X$ 
with $c_1(E)/ \rank(E)=-H/2$.
Then we have the following inequality: 
\begin{align}\label{bound}
\frac{\Delta(E) \cdot H}{\rank(E)^2} >\frac{2c}{b} = 1.5139 \cdots. 
\end{align}
\end{conj}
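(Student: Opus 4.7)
The plan is to reduce the inequality to a surface problem by restricting to a general hyperplane section $S \in |H|$. By adjunction $S$ is a smooth quintic surface in $\mathbb{P}^3$ of general type, with $K_S = H_S := H|_S$, $H_S^2 = 5$ and $\chi(\oO_S) = 5$; in particular $S$ is \emph{not} a K3 surface, so the clean Mukai-type bound on K3s is not directly available. The projection formula gives
\begin{equation*}
\Delta(E|_S) = \Delta(E) \cdot H, \qquad \rank(E|_S) = \rank(E),
\end{equation*}
so it suffices to bound $\Delta(E|_S)/\rank(E|_S)^2$ from below. By the Mehta--Ramanathan theorem combined with Flenner's or Langer's quantitative restriction theorem, $E|_S$ is $\mu_{H_S}$-semistable for generic $S$, and $\mu_{H_S}$-stable under suitable quantitative hypotheses.

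Assuming $E|_S$ is $\mu_{H_S}$-stable, Hirzebruch--Riemann--Roch on $S$ gives $\chi(E|_S, E|_S) = 5\rank(E)^2 - \Delta(E|_S)$; Serre duality and stability then yield
\begin{equation*}
5\rank(E)^2 - \Delta(E|_S) \leq 1 + \hom\bigl(E|_S, E|_S \otimes \oO_S(H_S)\bigr),
\end{equation*}
that is,
\begin{equation*}
\Delta(E|_S) \geq 5\rank(E)^2 - 1 - \hom\bigl(E|_S, E|_S \otimes \oO_S(H_S)\bigr).
\end{equation*}
The conjecture therefore reduces to an upper bound of the form $\hom(E|_S, E|_S(H_S)) \leq (5 - 2c/b)\rank(E)^2 - 1 \approx 3.486 \cdot \rank(E)^2$.

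The main obstacle is controlling $\hom(E|_S, E|_S \otimes \oO_S(H_S))$. Since $\mu(E|_S(H_S)) = \mu(E|_S) + 5 > \mu(E|_S)$, stability of $E|_S$ alone does not force vanishing, and one must invoke the global $\mu_H$-stability of $E$ on $X$. Twisting the restriction sequence $0 \to \oO_X \to \oO_X(H) \to \oO_S(H_S) \to 0$ by $\HOM(E, E)$ produces a long exact sequence relating $\Hom(E|_S, E|_S(H_S))$ to $\Hom(E, E(H))$ and $\Ext^1(E, E)$ on $X$; combined with Serre duality on the Calabi--Yau $X$ (where $\omega_X = \oO_X$), one hopes to bound these terms. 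In addition, the strictly semistable case of $E|_S$ requires analysing its Jordan--Hölder factors on $S$ and ruling out pathological configurations -- especially rank-one factors, whose individual discriminants vanish -- by lifting back to the stability of $E$ on $X$. These lifting steps are the essential gap separating the rank two case, handled by a direct Hartshorne--Serre/Clifford-type analysis of the curve associated to a rank two stable sheaf with $c_1 = -H$, from the higher rank case, and they constitute the main difficulty in proving the conjecture in general rank.
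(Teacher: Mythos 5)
The statement you are addressing is a conjecture: the paper offers no proof in general rank, only the rank-two case, and that case is established by an argument entirely different from your sketch (reduction to reflexive sheaves, the rank-two identity $F \cong F^{\vee}\otimes \det(F)$, Riemann--Roch on the 3-fold itself, and then either a universal-extension argument played against the classical BG inequality or a Castelnuovo genus bound for the curve $Z$ appearing in $0 \to \oO_X \to F \to \oO_X(H)\otimes I_Z \to 0$). So your proposal must be judged as a strategy for the open general case, and as such it contains two genuine gaps which are more serious than you indicate.

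First, stability (or even semistability) of $E|_S$ for a \emph{degree-one} hyperplane section is not supplied by the tools you cite: Mehta--Ramanathan requires restriction to $\lvert aH\rvert$ with $a\gg 0$, and the effective restriction theorems of Flenner and Langer either fail for $d=1$ once the rank grows or require an a priori upper bound on $\Delta(E)\cdot H$ --- which is circular, since that is exactly the quantity you are trying to control. Second, and more fundamentally, the Serre-duality step yields no information on a quintic surface. Since $K_S=H_S$, one computes $\ch\bigl((E|_S)^{\vee}\otimes E|_S\bigr)=(r^2,0,-\Delta(E|_S))$ and $e^{H_S}\td_S=(1,H_S/2,5)$, hence $\chi\bigl(E|_S,E|_S(H_S)\bigr)=5r^2-\Delta(E|_S)=\chi(E|_S,E|_S)$; combined with $\ext^2\bigl(E|_S,E|_S(H_S)\bigr)=\hom(E|_S,E|_S)=1$ for $E|_S$ stable, this forces $\hom\bigl(E|_S,E|_S(H_S)\bigr)\ge 5r^2-\Delta(E|_S)-1$. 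The upper bound you need on this Hom space is therefore \emph{equivalent} to the conjectured lower bound on $\Delta$, not a reduction of it: the argument closes into a circle. This is precisely why the Mukai-type bound (\ref{strong:K3}) works on K3 surfaces, where $K_S=0$ gives $\ext^2(F,F)=\hom(F,F)=1$ for free, and does not transport to surfaces of general type. Any viable approach must inject genuinely three-dimensional information about $E$ on $X$, which is what the paper's rank-two argument does and what your final paragraph correctly identifies as the missing ingredient.
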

The RHS of (\ref{bound}) is irrational, hence 
the equality is not achieved. 
Note that the RHS in (\ref{bound}) is very 
close to the RHS in (\ref{strong:K3})
for the K3 surface case. 

\begin{rmk}
A stronger BG inequality similar 
to (\ref{bound}) is predicted by~\cite{DRY}
without the condition $c_1(E)/\rank(E)=-H/2$. 
The prediction in~\cite{DRY}
is shown to be false in~\cite{Jar},~\cite{Naka}. 
Conjecture~\ref{strong} does not contradict to 
the results in~\cite{Jar},~\cite{Naka} since 
we restrict to the sheaves with 
fixed slope $c_1(E)/\rank(E)=-H/2$. 
\end{rmk}
There are few examples of stable sheaves on quintic 3-folds
in literatures. 
The following example is taken in~\cite{Jar}:
\begin{exam}
Let $E$ be the kernel of the morphism 
$\oO_X^{\oplus 6} \to \oO_X(1)^{\oplus 2}$
given by the matrix
\begin{align*}
\left( \begin{array}{cccccc}
x_0 & x_1 & 0 & x_2 & x_3 & 0 \\
0 & x_0 & x_1 & 0 & x_2 & x_3
\end{array}  \right). 
\end{align*}
Here $[x_0: x_1: x_2: x_3: x_4]$ is the homogeneous 
coordinates in $\mathbb{P}^4$. 
By~\cite{Jar}, $E$ is a stable vector bundle on $X$
with 
\begin{align*}
\ch(E)=(4, -2H, -H^2, -H^3/3). 
\end{align*}
Then we have
\begin{align*}
\frac{\Delta(E) \cdot H}{\rank(E)^2}
= \frac{15}{4} > 1.5139 \cdots. 
\end{align*}
\end{exam}
The rank two case will be 
treated in Subsection~\ref{rank:two}.

\subsection{Conjectural construction of the Gepner point}
We now give a conjectural construction of 
a desired $\aA_G$ assuming Conjecture~\ref{strong}.
Similarly to~\cite[Lemma~3.2.1]{BMT}, we have the following lemma: 
\begin{lem}\label{lem:property}
Suppose that Conjecture~\ref{strong} is true. 
Then for any non-zero $E \in \bB_{B, H}$, we have 
the following: 
\begin{itemize}
\item We have $H^2 \ch_1^B(E) \ge 0$. 
\item If $H^2 \ch_1^B(E)=0$, then we have 
$\Imm Z_{G}^{\dag}(E) \ge 0$. 
\item If $H^2 \ch_1^B(E)=\Imm Z_G^{\dag}(E)=0$, 
then $-\Ree Z_{G}^{\dag}(E)>0$. 
\end{itemize}
\end{lem}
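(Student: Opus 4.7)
The plan is to mirror the argument for \cite[Lemma~3.2.1]{BMT}, replacing the use of the classical Bogomolov-Gieseker inequality at the critical step by our stronger Conjecture~\ref{strong}. Any non-zero $E \in \bB_{B, H}$ sits in an exact sequence
\begin{align*}
0 \to F[1] \to E \to T \to 0
\end{align*}
in $\bB_{B, H}$ with $F \in \fF_{B, H}$ and $T \in \tT_{B, H}$, so each of the three items reduces to tracking Chern character contributions from $F$ and $T$ separately via $\ch(F[1]) = -\ch(F)$.

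For (i), each $\mu_{B, H}$-semistable Harder-Narasimhan factor of $T$ has $H^2 \ch_1^B \ge 0$, and each HN factor of $F$ has $H^2 \ch_1^B \le 0$, so $H^2 \ch_1^B(E) = H^2 \ch_1^B(T) - H^2 \ch_1^B(F) \ge 0$. These inequalities are strict except for torsion pieces of $T$ lying in $\Coh_{\le 1}(X)$ and for slope-zero torsion free factors of $F$; hence $H^2 \ch_1^B(E) = 0$ already forces $T \in \Coh_{\le 1}(X)$ together with $H^2 \ch_1^B(F_i) = 0$ for every HN factor $F_i$ of $F$.

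For (ii), I refine $F$ to a Jordan-H\"older filtration whose factors $F_{ij}$ are $\mu_{B, H}$-stable torsion free with $H^2 \ch_1^B(F_{ij}) = 0$. Using $\Pic(X) = \mathbb{Z} \cdot H$ (Lefschetz hyperplane theorem), this slope condition lifts to the honest equality $c_1(F_{ij})/\rank(F_{ij}) = -H/2$ in $H^2(X)$; in particular $\rank(F_{ij}) \ge 2$, so Conjecture~\ref{strong} applies and gives $H \Delta(F_{ij})/\rank(F_{ij})^2 > 2c/b$. A short computation using $\ch^B = e^{-B}\ch$ together with the slope condition identifies
\begin{align*}
H \ch_2^B(F_{ij}) = -\frac{H \Delta(F_{ij})}{2 \rank(F_{ij})},
\end{align*}
so Conjecture~\ref{strong} becomes $H \ch_2^B(F_{ij}) < -(c/b) \rank(F_{ij})$. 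Summing over $ij$ and combining with $H \ch_2(T) \ge 0$ (since $\ch_2(T)$ is an effective $1$-cycle) yields
\begin{align*}
\Imm Z_G^{\dag}(E) = b H \ch_2(T) - b H \ch_2^B(F) - c \rank(F) \ge 0,
\end{align*}
which is (ii).

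For (iii), the strict inequalities in the previous paragraph show that equality $\Imm Z_G^{\dag}(E) = 0$ forces both $F = 0$ and $H \ch_2(T) = 0$, hence $T \in \Coh_{\le 0}(X)$ and $E \cong T$ is a non-zero zero-dimensional sheaf; then $-\Ree Z_G^{\dag}(E) = \ch_3(T) = \length(T) > 0$. The main obstacle is the reduction step in (ii): Conjecture~\ref{strong} is stated only for torsion free stable sheaves with the precise normalization $c_1/\rank = -H/2$, so one must argue that the purely numerical slope-zero condition on Jordan-H\"older factors produces exactly this normalization. This is where the hypothesis that $X$ is a quintic 3-fold enters essentially, through $\Pic(X) = \mathbb{Z} \cdot H$; without this reduction, Conjecture~\ref{strong} would not directly apply to the factors of $F$.
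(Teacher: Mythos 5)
Your proposal is correct and takes essentially the same route as the paper, whose proof consists of the single remark that the argument of \cite[Lemma~3.2.1]{BMT} goes through with Conjecture~\ref{strong} replacing the classical BG inequality; your writeup simply supplies the details of that argument (torsion-pair decomposition, reduction to $\mu_{B,H}$-stable torsion-free factors with $c_1/\rank=-H/2$ via $\Pic(X)=\mathbb{Z}H$, and the identity $H\ch_2^B(F_{ij})=-H\Delta(F_{ij})/2\rank(F_{ij})$), all of which check out against the expression (\ref{rewrite}) for $Z_G^{\dag}$.
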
 
\begin{proof}
The same argument of~\cite[Lemma~3.2.1]{BMT} is applied 
by using Conjecture~\ref{strong} instead of 
the classical BG inequality. 
\end{proof}
The above lemma shows that the triple
\begin{align*}
(H^2 \ch_1^B(E), \Imm Z_G^{\dag}(E), -\Ree Z_G^{\dag}(E))
\end{align*}
should behave like $(\rank, c_1, \ch_2)$ on
coherent sheaves on algebraic surfaces. 
Similarly to the slope function on 
coherent sheaves, we consider the following slope function on 
$\bB_{B, H}$
\begin{align*}
\nu_{G}(E) \cneq 
\frac{\Imm Z_G^{\dag}(E)}{H^2 \ch_1^B(E)}. 
\end{align*}
Here we set $\mu_{G}(E)=\infty$
if $H^2 \ch_1^B(E)=0$. 
If we assume Conjecture~\ref{strong}, 
then Lemma~\ref{lem:property} shows that the slope function $\nu_G$
satisfies the weak see-saw property. 
\begin{defi}
An object $E \in \bB_{B, H}$ is $\nu_G$-(semi)stable if, for any 
non-zero proper subobject $F \subset E$ in $\bB_{B, H}$, we have 
the inequality
\begin{align*}
\nu_{B, H}(F)<(\le) \nu_{B, H}(E/F). 
\end{align*}
\end{defi}
We have the following lemma: 
\begin{lem}
Suppose that Conjecture~\ref{strong} is true. 
Then the $\nu_G$-stability on $\bB_{B, H}$ satisfies the 
Harder-Narasimhan property. 
\end{lem}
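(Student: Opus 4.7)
The plan is to adapt the approach of~\cite[Section~3.2]{BMT}, which itself follows Bridgeland's original construction~\cite{Brs1}. Assuming Conjecture~\ref{strong}, Lemma~\ref{lem:property} already provides the weak see-saw property for $\nu_G$; what remains is to establish two further categorical inputs, namely that $\bB_{B,H}$ is noetherian and that for every $E \in \bB_{B,H}$ the numerical invariants of subobjects take values in a discrete set bounded in terms of $E$. With these in hand, the standard extraction argument produces Harder-Narasimhan filtrations.

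First I would verify the discrete, bounded structure of subobject invariants. The twisted character $\ch_1^B(F) = \ch_1(F) + \tfrac{1}{2}\rank(F) \cdot H$ lies in a translate of the integral lattice, so $H^2 \ch_1^B(F) \in \tfrac{1}{2}\mathbb{Z}$. Non-negativity of this quantity for any object of $\bB_{B,H}$ is the first bullet of Lemma~\ref{lem:property}; applying the same bullet to the quotient $E/F \in \bB_{B,H}$ yields the upper bound $H^2 \ch_1^B(F) \le H^2 \ch_1^B(E)$. Thus only finitely many values arise. The corresponding discreteness of $\Imm Z_G^{\dag}$ on the locus $H^2 \ch_1^B = 0$ follows similarly, using that the coefficients of $Z_G^{\dag}$ lie in $\mathbb{Q}(\alpha)$ evaluated on integral Chern classes.

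Second, I would establish that $\bB_{B,H}$ is noetherian. This is essentially~\cite[Lemma~3.2.4]{BMT}: a descending chain of subobjects in the tilt induces, via the torsion pair $(\tT_{B,H}, \fF_{B,H})$, chains of coherent sheaves that must stabilize by noetherianness of $\Coh(X)$ together with boundedness of families of $\mu_{B,H}$-semistable sheaves with fixed invariants. Since this argument only uses the structure of the torsion pair and not the particular central charge, it transfers verbatim.

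With noetherianness and discreteness available, the HN property follows by the usual argument: for any $E \in \bB_{B,H}$ the supremum $\nu_{\max}(E)$ of $\nu_G$-slopes of nonzero subobjects is attained; a maximal subobject $F$ realizing $\nu_{\max}(E)$ is automatically $\nu_G$-semistable; and induction on $E/F$ terminates because $\nu_{\max}$ strictly drops at each step and the chain of subobjects cannot be infinite. The delicate point concerns objects with $H^2 \ch_1^B(E)=0$, for which $\nu_G = \infty$; the last two bullets of Lemma~\ref{lem:property} ensure that such objects form a Serre subcategory on which $-\Ree Z_G^{\dag}$ is positive, and this subcategory plays the role of a torsion class for $\nu_G$. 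I expect the main obstacle to be not conceptual but the careful bookkeeping required to adapt the signs and coefficients from~\cite{BMT}, where the large volume central charge has a different sign pattern, to the Gepner central charge $Z_G^{\dag}$ considered here.
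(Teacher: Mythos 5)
Your proposal follows essentially the same route as the paper's (two-line) proof: the only inputs beyond the formal extraction argument of \cite[Lemma~3.2.4]{BMT} and \cite[Proposition~7.1]{Brs2} are noetherianness of $\bB_{B,H}$ and the discreteness of the denominator $H^{2}\ch_{1}^{B}(\ast)\in\tfrac{1}{2}\mathbb{Z}$ of the slope $\nu_{G}$, which is exactly what the paper invokes. One caveat: your side claim that $\Imm Z_{G}^{\dag}$ is discrete on the locus $H^{2}\ch_{1}^{B}=0$ is false --- since $c/b$ is irrational, the values $bH\ch_{2}^{B}+c\ch_{0}^{B}$ range over a dense subset of $\mathbb{R}$ --- but this claim is not load-bearing: the argument only needs discreteness of the denominator of $\nu_{G}$ (the objects of infinite slope are handled by noetherianness alone), which is precisely why the paper remarks that the irrationality of $Z_{G}^{\dag}$ is harmless here.
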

\begin{proof}
Although the central charge $Z_G^{\dag}(\ast)$ is 
irrational, the values $H^2 \ch_1^B(\ast)$ are 
contained in $\frac{1}{2} + \mathbb{Z}$, hence they are discrete. 
This is enough to apply the same argument 
of~\cite[Lemma~3.2.4]{BMT},~\cite[Proposition~7.1]{Brs2}
to show the existence of Harder-Narasimhan filtrations 
with respect to $\nu_G$-stability. 
\end{proof}
Assuming Conjecture~\ref{strong}, we 
define the full subcategories in $\bB_{B, H}$
\begin{align*}
\tT_{G} &\cneq \langle E : \nu_{G} \mbox{-semistable with }
\nu_{G}(E)>0 \rangle_{\rm{ex}} \\
\fF_{G} &\cneq \langle E : \nu_{G} \mbox{-semistable with }
\nu_{G}(E) \le 0 \rangle_{\rm{ex}}. 
\end{align*}
As before, the pair $(\tT_G, \fF_G)$
forms a torsion pair on $\bB_{B, H}$. 
By taking the tilting, we obtain the 
heart of a bounded t-structure
\begin{align*}
\aA_G \cneq \langle \fF_G[1], \tT_G \rangle_{\rm{ex}}. 
\end{align*}
We propose the following conjecture: 
\begin{conj}\label{conj:const}
Let $X \subset \mathbb{P}^4$ be a smooth quintic 3-fold
and assume that Conjecture~\ref{strong} is true. 
Then the pair
\begin{align}\label{pair:Gep}
(Z_G^{\dag}, \aA_G)
\end{align}
determines a Gepner type stability condition 
on $D^b \Coh(X)$
with respect to $(\mathrm{ST}_{\oO_X} \circ \otimes \oO_X(1), 2/5)$. 
\end{conj}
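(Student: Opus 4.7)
The plan is to imitate the construction of the large-volume stability condition in~\cite{BMT}, with our strong BG inequality inserted at the appropriate places, and then to upgrade the resulting data to a Gepner type stability condition using the equivariance already built into the central charge $Z_G^\dag$. Since $\aA_G$ is a double tilt of $\Coh(X)$, it is automatically the heart of a bounded t-structure on $D^b\Coh(X)$. For the positivity~(\ref{positivity}), given any nonzero $E \in \aA_G$ one decomposes it via a triangle $F[1] \to E \to T$ with $F \in \fF_G$ and $T \in \tT_G$; the defining slope conditions on $\tT_G, \fF_G$ together with Lemma~\ref{lem:property} force $\Imm Z_G^\dag(T) \ge 0$ and $\Imm Z_G^\dag(F) \le 0$, hence $\Imm Z_G^\dag(E) \ge 0$, and when equality holds the third bullet of Lemma~\ref{lem:property} applied to the $\nu_G$-semistable factors of slopes $0$ and $\infty$ yields $-\Ree Z_G^\dag(E) > 0$.

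For the Harder--Narasimhan property, I would follow~\cite[Proposition~7.1]{Brs2} together with the tilt-level arguments of~\cite[Lemma~3.2.4]{BMT}: existence of HN filtrations for $(Z_G^\dag, \aA_G)$ reduces to noetherianity of $\aA_G$ and to discreteness of $\Imm Z_G^\dag$ on subobjects of a fixed class, the first propagating through two successive tiltings via Chern-character bounds, and the second following from $H^2 \ch_1^B(\ast) \in \frac{1}{2} + \mathbb{Z}$ together with integrality of the tilted invariants. Crucially, this step will require a further BG-type inequality controlling $\ch_3^B$ on $\nu_G$-semistable objects (the tilt-stable analogue of Conjecture~\ref{strong}), and this is precisely where I expect the main obstacle: Conjecture~\ref{strong} is only the first member of what should be a full chain of inequalities at successive tilt levels, and without the second one cannot rule out pathological infinite chains of subobjects when running the HN construction, nor guarantee that $\sigma_G$ lies in a good component of $\Stab(X)$.

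Finally, for Gepner equivariance with respect to $(\Phi, 2/5)$ where $\Phi \cneq \mathrm{ST}_{\oO_X} \circ \otimes \oO_X(1)$, the design of $Z_G^\dag$ already ensures $Z_G^\dag \circ \Phi = \alpha \cdot Z_G^\dag$, so only the equivariance of the slicing remains. Setting $\sigma_G' \cneq \Phi_{\ast} \sigma_G \cdot (-2/5)$, Theorem~\ref{thm:Orlov} combined with~(\ref{taud}) shows that $(\Phi, 2/5)^{\times 5} = ([2], 2)$, so that $\sigma_G$ and $\sigma_G'$ share the same central charge and differ only by an element of finite order in $\Auteq(X) \times \mathbb{C}$. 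I would then verify $\sigma_G = \sigma_G'$ by checking on a generating set of $D^b \Coh(X)$ -- for instance skyscrapers together with the line bundles $\oO_X(m)$ whose periods have been explicitly tabulated via Aspinwall's formula -- that the objects $\Phi(E)$ lie in the slice $\pP_G^\dag((\phi + 2/5, \phi + 2/5 + 1])$ whenever $E$ lies in $\pP_G^\dag((\phi, \phi+1])$, which by Bridgeland's uniqueness then forces equality of slicings.
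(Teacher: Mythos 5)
The statement you are trying to prove is labelled a \emph{conjecture} in the paper, and the paper offers no proof of it. The only thing the author establishes is recorded in the remark immediately following: the pair $(Z_G^{\dag}, \aA_G)$ satisfies the positivity condition (\ref{positivity}) ``by the construction and the irrationality of $Z_G^{\dag}$,'' while ``the irrationality of $Z_G^{\dag}$ makes it hard to prove the Harder--Narasimhan property.'' So there is no argument in the paper to compare yours against, and any purported proof must be measured against the fact that the author deliberately left the statement open.

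Your proposal is, to its credit, honest about this: you explicitly flag the HN step as the place where the argument breaks down, and you correctly observe that Conjecture~\ref{strong} alone is not enough to run the full BMT-style machine. That said, two points deserve sharpening. First, your diagnosis of the HN obstruction (a missing second-tilt BG inequality controlling $\ch_3^B$) is not quite the one the paper gives: the author points to the irrationality of $b$ and $c$, which destroys the discreteness of $\Imm Z_G^{\dag}$ on $\aA_G$ that the arguments of \cite[Proposition~7.1]{Brs2} and \cite[Lemma~3.2.4]{BMT} rely on. Your claim that discreteness ``follows from $H^2\ch_1^B(\ast)\in\frac12+\mathbb{Z}$ together with integrality of the tilted invariants'' is exactly what fails at the second tilt: $\Imm Z_G^{\dag}=bH\ch_2^B+c\ch_0^B$ takes values in a dense subgroup of $\mathbb{R}$, so the image of $Z_G^{\dag}$ on $\aA_G$ is not discrete and the standard HN argument does not apply. (The same irrationality is what \emph{saves} positivity, since $bx+cy=0$ with $x,y$ rational forces $x=y=0$.) Second, your treatment of the Gepner equivariance is not a proof: two stability conditions sharing a central charge are not determined by where a spanning class of objects sits in the slicing, and ``Bridgeland's uniqueness'' (local injectivity of the forgetful map) only identifies stability conditions that are already known to be close in $\Stab(X)$. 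In \cite{TGep} the Gepner property in lower-degree cases is obtained by showing directly that the autoequivalence preserves the constructed heart up to the prescribed shift, not by checking generators. As it stands, your write-up is a plausible strategy outline with the same open gaps the paper acknowledges, not a proof.
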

\begin{rmk}
By the construction
and the irrationality of $Z_G^{\dag}$,  
the pair (\ref{pair:Gep}) satisfies the 
condition (\ref{positivity}). 
On the other hand, the irrationality of $Z_G^{\dag}$ makes 
it hard to prove the Harder-Narasimhan property
of the pair (\ref{pair:Gep}). 
\end{rmk}

\subsection{Conjecture~\ref{strong} for the rank two case}\label{rank:two}
We show that Conjecture~\ref{strong}
is true in the rank two case. 
\begin{prop}
Conjecture~\ref{strong} is true when $\rank(E)=2$. 
\end{prop}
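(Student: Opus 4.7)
The plan is to reduce the inequality to an integrality statement and then exclude the one remaining borderline case by a Riemann-Roch and stability argument.

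First, for $\rank(E) = 2$ with $c_1(E) = -H$ and $H^3 = 5$, a direct Chern character computation gives $\Delta(E) \cdot H = 4\,c_2(E) \cdot H - 5$. Since $c_2(E) \cdot H \in \mathbb{Z}$, the target inequality $\Delta(E) \cdot H / \rank(E)^2 > 2c/b = 1.5139\ldots$ is equivalent to $c_2(E) \cdot H \ge 3$. The classical Bogomolov-Gieseker inequality yields $c_2(E) \cdot H \ge 5/4$, hence $\ge 2$, so the task reduces to excluding the single case $c_2(E) \cdot H = 2$.

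Second, assume for contradiction that $c_2(E) \cdot H = 2$, and extract the cohomological data. Using $c(T_X) = 1 + 10 H^2 - 40 H^3$ (so $c_2(T_X) \cdot H = 50$), Riemann-Roch on the Calabi-Yau 3-fold $X$ gives $\chi(E) = -4$ and $\chi(E(1)) = 4$. Slope stability with $\mu(E) < 0$ forces $H^0(E) = 0$. For the rank-$2$ sheaf with $\det E = \mathcal{O}_X(-H)$, the canonical identification $E^\vee \cong E(1)$ combined with Serre duality on $X$ gives $h^{3-i}(E) = h^i(E(1))$; substituting this into $\chi(E(1)) = 4$ forces $H^0(E(1)) = 0$, and hence also $H^3(E) = 0$.

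Third, I produce a non-zero map $\phi \colon E(-1) \to E$ and show it must be injective. Multiplication by linear forms embeds $H^0(\mathcal{O}_X(1))$ into $\Hom(E(-1), E)$, so $\Hom(E(-1), E) \ne 0$. For any non-zero $\phi$, the saturated image $\widetilde I$ in $E$ has rank $1$ or $2$. In the rank-$1$ case, using $\Pic(X) = \mathbb{Z}\cdot H$ for the very general quintic 3-fold together with the strict slope inequalities $-15/2 < \mu(\widetilde I) < -5/2$ (coming from stability of $E(-1)$ and $E$), the only integer multiple of $5$ available in this range is $-5$, so $\widetilde I = \mathcal{O}_X(-H)$. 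This yields a non-zero element of $\Hom(\mathcal{O}_X(-H), E) = H^0(E(1))$, contradicting the vanishing from step two. Hence every non-zero $\phi$ is injective, with torsion cokernel supported on a divisor.

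Fourth, to close, I analyze the injective case by restriction. Taking $\phi = \cdot s$ with $s \in H^0(\mathcal{O}_X(1))$, the cokernel is $E|_S$ for the hyperplane section $S = (s=0) \in |H|$, a smooth quintic surface with $K_S = H_S$ and, for very general $S$, $\Pic(S) = \mathbb{Z}\cdot H_S$. A Mehta-Ramanathan-style argument (noting that strict semistability of $E|_S$ would require a destabilizing sub with $c_1 \cdot H_S = -5/2$, incompatible with $\Pic(S) = \mathbb{Z} H_S$) shows $E|_S$ is $\mu_H$-stable on $S$, and Riemann-Roch together with the small discriminant $\Delta(E|_S) = 3$ forces a specific extension $0 \to \mathcal{O}_S(-H_S) \to E|_S \to I_Z \to 0$ with $\ell(Z) = 2$. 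This surface-level line subsheaf, through the short exact sequence $0 \to E \to E(1) \to E(1)|_S \to 0$ on $X$, should lift to a non-zero section of $E(1)$ and thereby contradict $H^0(E(1)) = 0$ from step two. The main obstacle is precisely this last lifting: proving that the surface-level section of $E(1)|_S$ arises from a global section on $X$ requires delicate control of the connecting morphism $H^0(E(1)|_S) \to H^1(E)$ and is the technical heart of the argument.
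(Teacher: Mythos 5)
Your first step agrees with the paper: using $\Delta(E)\cdot H=4c_2(E)\cdot H-5$, integrality of $c_2(E)\cdot H$, and the classical BG inequality, everything reduces to excluding $c_2(E)\cdot H=2$. From there, however, your argument has a fatal flaw in Step 2. Serre duality on the Calabi--Yau $X$ together with $E^\vee\cong E(1)$ gives $h^{3-i}(E)=h^i(E(1))$, and hence $\chi(E(1))=-\chi(E)$; this is a consistency identity, not a vanishing statement, and it cannot ``force'' $H^0(E(1))=0$. Indeed $\mu(E(1))>0$, so stability gives no vanishing either, and a positive Euler characteristic would, if anything, suggest $h^0$ or $h^2$ is nonzero. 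The possibility $H^0(E^\vee)\neq 0$ is precisely the hard half of the problem: the paper's proof splits into the two cases $H^0(F)=0$ and $H^0(F)\neq 0$ for $F=E^\vee$, disposing of the first via the universal extension $0\to\mathcal{O}_X\otimes\Ext^1(F,\mathcal{O}_X)^\vee\to\mathcal{U}\to F\to 0$ (stable by a lemma of [Todb]) and the BG inequality for $\mathcal{U}$, and of the second via the extension $0\to\mathcal{O}_X\to F\to\mathcal{O}_X(H)\otimes I_Z\to 0$ and the Castelnuovo genus bound for the degree-$2$ curve $Z$. Since your Steps 3 and 4 both invoke $H^0(E(1))=0$ to reach their contradictions, they inherit this gap. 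A secondary issue: your value $\chi(E)=-4$ presupposes that $\ch_3(E)$ is determined, which uses $E^\vee\cong E(1)$ on the level of Chern characters and is only valid for locally free $E$; for a reflexive non-locally-free sheaf there is a correction by the length of $\mathcal{E}xt^1(E^\vee,\mathcal{O}_X)$, which the paper tracks explicitly as $\lvert Q\rvert$ (and one must first reduce to reflexive $E$, which you do not do).

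Finally, you yourself flag that Step 4 is incomplete: the lifting of the section of $E(1)|_S$ through the connecting map $H^0(E(1)|_S)\to H^1(E)$ is exactly the point where the restriction-to-a-hyperplane strategy stalls, and no argument is offered. So the proposal is not a proof; the two substantive gaps are the unjustified vanishing $H^0(E(1))=0$ and the unproven lifting in Step 4. The paper's two-case argument avoids both by never needing such a vanishing.
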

\begin{proof}
Since we have the inequality
\begin{align*}
\Delta(E^{\vee \vee}) \cdot H \ge \Delta(E) \cdot H
\end{align*}
we may assume that $E$ is reflexive. 
Since $\rank(E)=2$, we have $c_1(E)=-H$
and 
\begin{align*}
\Delta(E) \cdot H= -H^3 + 4c_2(E) \cdot H. 
\end{align*}
The classical BG inequality implies that 
$\Delta(E) \cdot H \ge 0$, i.e. 
$c_2(E) \cdot H \ge 5/4$. 
The conjectural inequality (\ref{bound}) 
is equivalent to that $c_2(E) \cdot H > 2.7639 \cdots$. 
It is enough to exclude the case 
$c_2(E) \cdot H=2$, or equivalently 
$\ch_2(E) \cdot H= 1/2$. 

Suppose by contradiction that $\ch_2(E) \cdot H = 1/2$. 
Let us set $F \cneq E^{\vee}$, which is also a
torsion free slope stable sheaf. 
Since $F$ is reflexive,  
we have 
\begin{align*}
\eE xt^i(F, \oO_X)=0, \quad i\ge 2
\end{align*}
and $Q \cneq \eE xt^1(F, \oO_X)$ is a zero dimensional sheaf
by~\cite[Proposition~1.1.10]{Hu}. 
This implies that there is a distinguished triangle
\begin{align*}
F^{\vee} \to \mathbb{D}(F) \to Q[-1]
\end{align*}
where $\mathbb{D}(\ast)$ is the derived dual 
$\dR \hH om(\ast, \oO_X)$. 
Therefore if we write
\begin{align}\label{chF}
\ch(F)&=(2, H, \ch_2(F), \ch_3(F))
\end{align}
then we have $\ch_2(F^{\vee})=\ch_2(F)=\ch_2(E)$ 
and 
\begin{align}\label{chF2}
\ch(F^{\vee})=(2, -H, \ch_2(E), -\ch_3(F) + \lvert Q \rvert). 
\end{align}
Here 
$\lvert Q \rvert$ is the length of
the zero dimensional sheaf $Q$. 
On the other hand, since $F$ is a rank two
reflexive sheaf, 
we have the isomorphism
(cf.~\cite[Proposition~1.10]{Hart})
\begin{align*}
F \cong F^{\vee} \otimes \det(F). 
\end{align*}
Noting that $\det (F)=\oO_X(H)$, and (\ref{chF}), (\ref{chF2}), 
we have
\begin{align*}
(2, H, \ch_2(E), \ch_3(F))= e^H \cdot 
(2, -H, \ch_2(E), -\ch_3(F) + \vert Q \rvert). 
\end{align*}
The above equality and the assumption 
$\ch_2(E) \cdot H=1/2$ imply that
\begin{align}\label{ch_3}
\ch_3(F)= -\frac{1}{6} + \frac{\lvert Q \rvert}{2}. 
\end{align}
Noting that $c_2(X)=10H^2$, 
the Riemann-Roch theorem and (\ref{ch_3}) imply that
\begin{align}\notag
\chi(F) &\cneq \sum_{i=0}^{3} (-1)^i \dim H^i(X, F) \\
\label{Q}
&= 4 + \frac{\lvert Q \vert}{2}. 
\end{align}
We divide into two cases:
\begin{case}
$H^0(X, F)=0$. 
\end{case}
By the Serre duality and stability, we have
\begin{align*}
H^3(X, F) \cong H^0(F, \oO_X) \cong 0. 
\end{align*}
Therefore, by the assumption $H^0(X, F)=0$ and (\ref{Q}), 
we have 
\begin{align}\label{H2(XF)}
\dim \Ext^1(F, \oO_X)= 
\dim H^2(X, F) \ge 4.
\end{align}
Let us take the universal extension
\begin{align*}
0 \to \oO_X \otimes \Ext^1(F, \oO_X)^{\vee} \to 
\uU \to F \to 0. 
\end{align*}
Then by~\cite[Lemma~2.1]{noteBG}, 
the sheaf $\uU$ is a torsion free slope stable sheaf. 
Applying the BG inequality to $\uU$, we obtain the 
inequality
\begin{align*}
(H^2-2\ch_2(E)(2+ \dim \Ext^1(F, \oO_X))) \cdot H \ge 0. 
\end{align*}
The above inequality implies that 
$\dim \Ext^1(F, \oO_X) \le 3$, which contradicts to 
(\ref{H2(XF)}).

\begin{case}
$H^0(X, F) \neq 0$. 
\end{case}
Let us take a non-zero element $s \in H^0(X, F)$, and 
an exact sequence
\begin{align}\label{OFM}
0 \to \oO_X \stackrel{s}{\to} F \to M \to 0. 
\end{align}
By~\cite[Lemma~2.2]{noteBG}, 
the sheaf $M$ is a torsion free slope 
stable sheaf. 
Therefore it is written as 
\begin{align*}
M \cong \oO_X(H) \otimes I_Z
\end{align*}
for some subscheme $Z \subset X$
with $\dim Z \le 1$. 
We have the equalities of Chern characters
\begin{align*}
\ch_2(F) &= \frac{1}{2}H^2 - [Z] \\
\ch_3(F) &= \frac{1}{6}H^3 - H \cdot [Z] -\chi(\oO_Z). 
\end{align*}
Because $\ch_2(F) \cdot H= \ch_2(E) \cdot H=1/2$, 
we have $H \cdot [Z]=2$. Hence we obtain
\begin{align*}
\ch_3(F)= -\frac{7}{6} -\chi(\oO_Z). 
\end{align*}
On the other hand, (\ref{ch_3}) implies that $\ch_3(F) \ge -1/6$, 
hence we have $\chi(\oO_Z) \le -1$. 
By taking the generic 
projection of the one dimensional subscheme $Z \subset \mathbb{P}^4$
to $\mathbb{P}^3$, 
the 
Castelnuovo inequality implies 
\begin{align*}
g(Z) \cneq h^1(\oO_Z) \le \frac{1}{2}(H \cdot [Z] -1) (H \cdot [Z]-2). 
\end{align*}
Since $H \cdot [Z]=2$, we have $h^1(\oO_Z)=0$, which 
contradicts to $\chi(\oO_Z) \le -1$. 
\end{proof}

\section{Clifford type bound for quintic surfaces}\label{sec:Cli}
In this section, we take $W'$ to be a quintic homogeneous 
polynomial with four variables
\begin{align*}
W' \in \mathbb{C}[x_0, x_1, x_2, x_3], \quad \deg(W')=5.
\end{align*}
We consider Conjecture~\ref{conj:main} in this case. 
We relate it with some Clifford type bound 
for stable coherent systems on 
the smooth quintic surface
\begin{align*}
S\cneq (W'=0) \subset \mathbb{P}^3. 
\end{align*}
\subsection{Computation of the central charge} 
The surface $S$ is a hyperplane section
$(x_4=0)$ of a 
quintic 3-fold $X \cneq (W=0) \subset \mathbb{P}^4$, where 
$W$ is defined by
\begin{align*}
W \cneq W' + x_4^5 \in \mathbb{C}[x_0, x_1, x_2, x_3, x_4]. 
\end{align*} 
By Theorem~\ref{thm:Orlov2}, there is the heart of 
a bounded t-structure 
$\aA_{W'} \subset \HMF(W')$,
and an equivalence
\begin{align*}
\Theta \colon \mathrm{Syst}(S) \stackrel{\sim}{\to} 
\aA_{W'}. 
\end{align*}
Below we abbreviate $\Theta$ and regard
a coherent system $(\oO_S^{\oplus R} \to F)$
as an object in $\aA_{W'}$. 
There is a natural push-forward functor (cf.~\cite{UedaM})
\begin{align*}
i_{\ast} \colon 
\HMF(W') \to \HMF(W) 
\end{align*}
such that by~\cite[Lemma~3.12]{TGep}
and~\cite[Lemma~4.5]{TGep}, 
we have 
\begin{align*}
i_{\ast}(\oO_S^{\oplus R} \to F) \cong \Psi(\oO_X^{\oplus R} \to i_{\ast}F). 
\end{align*}
Here 
$i_{\ast}F$ is the usual sheaf push-forward for the 
embedding $i \colon S \hookrightarrow X$, 
$\Psi \colon D^b \Coh(X) \stackrel{\sim}{\to}
\HMF(W)$ an equivalence in Theorem~\ref{thm:Orlov} 
and 
\begin{align*}
(\oO_X^{\oplus R} \to i_{\ast}F) \in D^b \Coh(X)
\end{align*}
is an object in the derived category
with 
$i_{\ast}F$ located in degree zero. 
Let us consider the central charge $Z_G^{'\dag}$ 
on $\HMF(W')$ defined by 
\begin{align*}
Z_G^{'\dag}(P) \cneq Z_G^{\dag}(\Psi^{-1} i_{\ast} P), \quad P \in \HMF(W')
\end{align*}
where $Z_G^{\dag}$ is the central charge (\ref{rewrite})
on $D^b \Coh(X)$
considered in the previous section. 
By the argument in~\cite[Section~4]{TGep}, 
the central charge $Z_G^{'\dag}$
on $\HMF(W')$ differs from 
(\ref{Z_G}) only up to a scalar multiplication. 
For $F \in \Coh(S)$, let us write
\begin{align*}
\ch(F)=(r, l, n) \in H^0(S) \oplus H^2(S) \oplus H^4(S)
\end{align*}
with $r\in \mathbb{Z}$ and $n\in \frac{1}{2} + \mathbb{Z}$. 
By setting $H=c_1(\oO_X(1))$ and $B=-H/2$, 
we have 
\begin{align*}
&\ch^B(\Psi^{-1}i_{\ast}(\oO_S^{\oplus R} \to F)) \\
&= \ch^B(\oO_X^{\oplus R} \to i_{\ast}F) \\
&=\left( -R, \left( r- \frac{R}{2} \right) H, 
i_{\ast}l -\frac{R}{8}H^2, 
n+ \frac{5}{24}r - \frac{5}{48}R \right). 
\end{align*}
Applying the computation of $Z_G^{\dag}$ in the previous section, 
we have
\begin{align*}
Z_G^{'\dag}(\oO_S^{\oplus R} \to F) 
= -n-\frac{5}{24}r &+ \frac{5}{48}R + 5a\left( r-\frac{R}{2} \right) \\
&+ \sqrt{-1} \left( b\left( h \cdot l - \frac{5}{8}R \right) -cR  \right). 
\end{align*}
Here $h\cneq H|_{S}$ and 
$a, b, c$ are irrational numbers given in (\ref{rewrite}).

\subsection{Conjectural Clifford type bound}
We expect that a desired Gepner type 
stability condition in this case is constructed 
via double tilting of $\aA_{W'}$, similarly to the 
previous section. 
Let $\mu'$ be the slope function on $\aA_{W'}$, given by 
(using the notation in the previous subsection)
\begin{align*}
\mu'(\oO_S^{\oplus R} \to F) &\cneq -\frac{\ch_1^B(i_{\ast}F) \cdot H^2}{R} \\
&=5 \left(\frac{1}{2}- \frac{\rank(F)}{R} \right). 
\end{align*}
Here we set $\mu'(\ast)=-\infty$ if 
$R=0$. 
(Also see~\cite[Subsection~5.4]{TGep}.)
The above slope function defines the $\mu'$-stability 
on $\aA_{W'}$, which 
satisfies the Harder-Narasimhan property (cf.~\cite[Lemma~5.14]{TGep}). 
Following the same argument in the previous section, 
we expect that any $\mu'$-stable
 object $E \in \aA_{W'}$ with 
$\mu'(E)=0$ satisfies 
$\Imm Z_G^{'\dag}(E) \ge 0$. 
It leads to the following conjecture: 
\begin{conj}\label{conj:Clifford}
Let $S \subset \mathbb{P}^3$ be a 
smooth quintic surface and
$h=c_1(\oO_S(1))$. For a $\mu'$-stable 
coherent system  
$(\oO_S^{\oplus R} \to F)$ on $S$ with 
$R= 2 \rank(F)>0$, we have the following inequality
\begin{align*}
\frac{c_1(F) \cdot h}{R} > \frac{5}{8} + \frac{c}{b} =1.3818 \cdots. 
\end{align*}
\end{conj}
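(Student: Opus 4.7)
The plan is to mirror the strategy of the rank-two 3-fold argument of Subsection~\ref{rank:two}, adapted to coherent systems on a surface. First I would extract structural consequences from $\mu'$-stability of $(\oO_S^{\oplus R} \xrightarrow{s} F)$ with $R = 2\rank(F)$ and $r \cneq \rank(F)$: (i) the induced map $\mathbb{C}^R \hookrightarrow H^0(F)$ is injective (otherwise a nonzero element of the kernel yields a subobject $(\oO_S \to 0)$ of slope $5/2 > 0$); (ii) for every subsheaf $F' \subset F$ of rank $r'$, the subspace of $\mathbb{C}^R$ landing in $F'$ has dimension at most $2r' - 1$; (iii) the sheaf $F$ itself is $h$-semistable on $S$. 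I would then replace $F$ by its reflexive hull, which on the smooth surface $S$ is locally free, preserving $c_1(F)$ and $\mu'$-stability while only improving the Bogomolov--Gieseker discriminant.

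Arguing by contradiction, assume $d \cneq c_1(F) \cdot h$ satisfies $d/R \le 5/8 + c/b$, which carves out a finite list of integer values of $d$ for each $r$. On the quintic surface (for which $K_S = h$, $\chi(\oO_S) = 5$, $h^2 = 5$) Riemann--Roch gives
\[
\chi(F) = 5r + \tfrac{1}{2} c_1(F)^2 - \tfrac{1}{2} d - c_2(F).
\]
Combined with the classical BG inequality $2r\, c_2(F) \ge c_1(F)^2$ and the Hodge index estimate $5\, c_1(F)^2 \le d^2$, this yields a concrete upper bound on $\chi(F)$ purely in terms of $r$ and $d$. On the other hand, (i) gives $h^0(F) \ge R = 2r$, and Serre duality produces $h^2(F) = h^0(F^{\vee} \otimes \oO_S(h))$.

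I would then split into two cases mirroring Subsection~\ref{rank:two}. If $H^0(F^{\vee}(h)) = 0$, then $h^2(F) = 0$ and so $h^1(F) \ge 2r - \chi(F)$; a universal extension of $F$ by $\oO_S$ (analogous to the Case 1 construction in Subsection~\ref{rank:two}), together with (ii) and the BG inequality applied to the extension, should produce a numerical contradiction with the upper bound on $\chi(F)$ for the forbidden values of $d$. If $H^0(F^{\vee}(h)) \ne 0$, a nonzero section gives a nonzero morphism $F \to \oO_S(1)$ whose kernel is a subsheaf $F_1 \subset F$ of rank $r-1$; the way the $R$ sections of $F$ can split between $F_1$ and the image in $\oO_S(1)$ is tightly constrained by (ii), giving a recursive bound for $d$ in terms of the rank $r-1$ case. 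As an auxiliary tool in small rank I would restrict to a general hyperplane section $C = S \cap H'$, a smooth plane quintic of genus $6$, and apply Newstead's Clifford theorem for semistable bundles on curves to $F|_C$, which carries at least $R = 2r$ independent sections.

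The principal obstacle is that the target threshold $5/8 + c/b$ is irrational and lies strictly above what classical Clifford-style bounds deliver: on $C$ the Newstead bound yields only $d \ge 2r$, whereas the conjecture demands $d > 2.7637\cdots r$. Excluding the gap values (e.g.\ at $r=3$ one must rule out $d \in \{6,7,8\}$) is precisely where a refined Mercat-type Clifford inequality on $C$, or alternatively a reduction to Conjecture~\ref{strong} via the push-forward $i_{\ast} \colon \Coh(S) \to \Coh(X)$ into a quintic 3-fold (noting that $\ch_1^B$ of the pushed-forward two-term complex vanishes precisely when $R = 2r$), becomes essential. A secondary difficulty is the rank-one case for surfaces $S$ of Picard rank $> 1$ admitting divisors $D$ with $D \cdot h$ small; this seems to require a direct classification of effective divisor classes on $S$ whose two-dimensional linear subsystems are base-point free away from an allowed $0$-cycle.
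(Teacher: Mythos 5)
The statement you are proving is stated in the paper only as a conjecture: the paper establishes just the case $R=2\rank(F)=2$ (the final Lemma of Section~\ref{sec:Cli}), so there is no complete proof in the paper to match your general-rank plan against, and your own writeup concedes that the higher-rank part does not close. In the one case the paper does handle, your strategy is essentially the paper's: reduce to $F$ torsion free (via a torsion-free quotient of the system), pass to the line bundle $\lL\cneq F^{\vee\vee}$, restrict to a smooth member $C\in\lvert h\rvert$ (a plane quintic of genus $6$), and apply Clifford's theorem to $\lL|_C$, which inherits the two sections because $c_1(\lL(-C))\cdot h=d-5<0$ forces $h^0(\lL(-C))=0$. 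The concrete gap in your version is quantitative: you invoke only the Newstead/plain Clifford bound $d\ge 2r$, which at $r=1$ gives $d\ge 2$, one short of the required $d\ge 3$. The paper's extra ingredient is the \emph{strict} form of Clifford's theorem: equality $h^0=\tfrac12\deg+1$ forces $\lL|_C\cong\oO_C$ or $K_C$ or $C$ hyperelliptic, and a smooth plane quintic is not hyperelliptic while a degree-$\le 2$ line bundle is neither trivial nor canonical (which has degree $10$); hence $h^0(\lL|_C)<2$, contradicting $h^0(\lL|_C)\ge 2$. Without this strictness your argument does not reach the threshold $5/8+c/b$ even for $R=2$.

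Two further points on the higher-rank outline. Your claim (iii), that $F$ is $h$-semistable, does not follow from $\mu'$-stability of the system: a subsheaf $F'\subset F$ gives the subobject $(0\to F')$ of the coherent system, whose slope is $-\infty$ by the paper's convention, so it imposes no constraint; yet semistability of $F$ (plus a restriction theorem) is exactly what you would need to apply a Clifford-type bound to $F|_C$. And your proposed reduction to Conjecture~\ref{strong} via $i_\ast$ is not available as a proof, since that conjecture is itself open beyond rank two. So the proposal should be read as a plausible program whose base case needs the strict Clifford refinement actually used in the paper, and whose general case remains, as in the paper, conjectural.
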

If we assume the above conjecture, we are able to 
construct a double tilting $\aA_{G}'$ of $\aA_{W'}$, 
such that the pair $(Z_G^{'\dag}, \aA_G')$
satisfies
\begin{align*}
Z_G^{'\dag}(\aA_G' \setminus \{0\})
\subset \{ r \exp(\sqrt{-1} \pi \phi) : r>0, \phi \in (0, 1] \}. 
\end{align*}
We conjecture that the pair $(Z_G^{'\dag}, \aA_G')$
gives a Gepner type stability 
condition on $\HMF(W')$ with respect to $(\tau, 2/5)$. 
The construction of $\aA_G'$ is similar to $\aA_G$ in the 
previous section, and we 
leave the readers to give its explicit construction. 
We just check the easiest case of Conjecture~\ref{conj:Clifford}:
\begin{lem}
Conjecture~\ref{conj:Clifford} is true if 
$R=2\rank(F)=2$. 
\end{lem}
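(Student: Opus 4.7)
The plan is to translate $\mu'$-stability into a concrete constraint on the two sections and then to bound $c_1(F)\cdot h$ by a short case analysis of low-degree curves on $S$. Since $c_1(F)\cdot h$ is an integer and the right-hand side of the claimed bound equals $5/4+2c/b=2.7638\cdots$, it suffices to show $c_1(F)\cdot h\ge 3$.

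First I would unpack what $\mu'$-stability means for $E\cneq(\oO_S^{\oplus 2}\stackrel{s}{\to}F)$. Here $\mu'(E)=0$, and subobjects in $\aA_{W'}\simeq\mathrm{Syst}(S)$ have the form $(V'\otimes\oO_S\to F')$ with $V'\subseteq\mathbb{C}^2$, $F'\subseteq F$, and $s(V'\otimes\oO_S)\subseteq F'$. Let $T\subseteq F$ be the torsion subsheaf. If some $v\in\mathbb{C}^2\setminus\{0\}$ satisfies $s(v)\in H^0(T)$, then $(\mathbb{C}v\otimes\oO_S\to s(v)\cdot\oO_S)$ is a subobject with slope $5/2$ whose quotient has slope $-5/2$, contradicting the strict inequality required for stability. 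Hence the map $\mathbb{C}^2\to H^0(F/T)$ is injective.

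Write $G\cneq F/T\cong L_1\otimes I_{Z_1}$ for a line bundle $L_1$ and a zero-dimensional subscheme $Z_1\subset S$. The previous step yields $h^0(L_1)\ge h^0(G)\ge 2$, so $L_1\cong\oO_S(D)$ with $D$ a non-trivial effective divisor carrying a pencil. Ampleness of $h$ forces $L_1\cdot h\ge 1$, while $c_1(T)\cdot h\ge 0$, so $d\cneq c_1(F)\cdot h\ge 1$. It then remains to rule out $d=1$ and $d=2$, in which cases $T$ is forced to have zero-dimensional support and $D$ becomes an effective curve of degree at most $2$ in $\mathbb{P}^3$ contained in $S$. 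Such $D$ is either a line, a smooth conic, a pair of lines, or a double line. For each option I would combine $K_S=h$, the adjunction formula (which forces $\ell^2=-3$ on any line $\ell\subset S$), and the short exact sequence $0\to\oO_S\to\oO_S(D)\to\oO_D(D)\to 0$ (with component-wise refinements in the reducible or non-reduced cases) to compute $h^0(\oO_S(D))=1$, contradicting $h^0(L_1)\ge 2$.

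The main obstacle is making the case $d=2$ genuinely exhaustive. While a generic quintic surface satisfies $\Pic(S)=\mathbb{Z}\cdot h$ and admits no effective class of degree $2$, the statement must cover all smooth quintic surfaces, including those containing lines, conics, or unions of lines. The adjunction computation outlined above handles every such curve type uniformly once $\ell^2=-3$ is available, but the enumeration of effective degree-two divisor classes on an arbitrary smooth quintic surface deserves care.
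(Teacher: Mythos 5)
Your proposal is correct, and the first half coincides with the paper's: both use $\mu'$-stability against rank-zero subsystems to force two independent sections of the torsion-free part of $F$, reducing everything to the non-existence of a line bundle $L$ on $S$ with $h^0(L)\ge 2$ and $L\cdot h\le 2$ (the paper passes to a torsion-free quotient and then to $\lL=F^{\vee\vee}$, you quotient by the torsion subsheaf; same mechanism). Where you genuinely diverge is the last step. You enumerate effective divisors $D$ with $D\cdot h\le 2$ (line, irreducible conic, pair of lines, double line) and kill each by adjunction: $\ell^2=-3$, $D^2=-4$ for a plane conic, and negativity of $\deg\oO_{D'}(D)$ on each component, giving $h^0(\oO_S(D))=1$ in every case. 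This does close: the enumeration you flag as the main obstacle is in fact complete, since an effective divisor of degree $\le 2$ is a sum of prime divisors of total degree $\le 2$ and an integral curve of degree $1$ (resp.\ $2$) in $\mathbb{P}^3$ is a line (resp.\ a plane conic); no assumption on $\Pic(S)$ is needed because adjunction plus $K_S=h$ handles an arbitrary smooth quintic. The paper instead restricts $\lL$ to a smooth member $C\in\lvert h\rvert$, a smooth plane quintic (genus $6$, non-hyperelliptic), deduces $h^0(\lL|_C)\ge 2$ from $h^0(\lL(-C))=0$, and contradicts the strict form of Clifford's theorem since $\lL|_C\neq 0,K_C$. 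That route is shorter and uniform in the degree, sidestepping any classification of curves on $S$; yours is more elementary but case-based. One small inaccuracy: for $d=2$ the torsion subsheaf need not have zero-dimensional support (it may contain a line, leaving $L_1\cdot h=1$), but since your argument only uses $L_1\cdot h\le 2$ nothing breaks.
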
 
\begin{proof}
Let $(\oO_S^{\oplus 2} \stackrel{s}{\to} F)$ be a $\mu'$-stable 
coherent system on $S$ with $\rank(F)=1$. 
The inequality in Conjecture~\ref{conj:Clifford}
is equivalent to $c_1(F) \cdot h > 2.7636 \cdots$. 
It is enough to show that $c_1(F) \cdot h \ge 3$. 
Let $F \twoheadrightarrow F'$ be a torsion free quotient. 
There is a surjection in $\aA_{W'}$
\begin{align*}
(\oO_S^{\oplus 2} \to F) \twoheadrightarrow 
(\oO_S^{\oplus 2} \to F')
\end{align*}
whose kernel is of the form $(0 \to F'')$
for a torsion sheaf $F''$ on $S$. 
Obviously $(\oO_S^{\oplus 2} \to F')$ is also 
$\mu'$-stable, and $c_1(F') \cdot h \le c_1(F) \cdot h$. 
Hence we may assume that $F$ is torsion free. 
Also note that $h^0(F) \ge 2$, since otherwise 
there is an injection in $\aA_{W'}$
\begin{align*}
 (\oO_S \to 0) \hookrightarrow (\oO_S^{\oplus 2} \to F)
\end{align*}
satisfying
\begin{align*}
\mu'(\oO_S \to 0)=5/2>0=\mu'(\oO_S^{\oplus 2} \to F)
\end{align*} 
which contradicts to the $\mu'$-stability of $(\oO_S^{\oplus 2} \to F)$. 
Let us set $\lL \cneq F^{\vee \vee}$, and take 
a smooth member $C \in \lvert h \rvert$. 
Note that $\lL$ is a line bundle satisfying 
$h^0(\lL)\ge 2$, and $C$ is a smooth quintic curve in $\mathbb{P}^2$. 
Suppose by contradiction that 
$c_1(F) \cdot h =c_1(\lL) \cdot h \le 2$. 
We have the exact sequence
\begin{align*}
0 \to \lL(-C) \to \lL \to \lL|_{C} \to 0. 
\end{align*}
Since $c_1(\lL(-C)) \cdot h= c_1(\lL) \cdot h -5 <0$
by our assumption, we have $h^0(\lL(-C))=0$
and $h^0(\lL|_{C}) \ge 2$. On the other hand, 
Clifford's theorem on $C$ yields (cf.~\cite[Theorem~5.4]{Ha2})
\begin{align*}
h^0(\lL|_{C}) \le \frac{1}{2} \deg (\lL|_{C}) + 1 \le 2. 
\end{align*}
Furthermore, the first inequality is strict 
since $\lL|_{C} \neq 0, K_C$, and $C$ is not hyperelliptic. 
Therefore we obtain a contradiction. 
\end{proof}

\providecommand{\bysame}{\leavevmode\hbox to3em{\hrulefill}\thinspace}
\providecommand{\MR}{\relax\ifhmode\unskip\space\fi MR }
\providecommand{\MRhref}[2]{%
  \href{http://www.ams.org/mathscinet-getitem?mr=#1}{#2}
}
\providecommand{\href}[2]{#2}

Kavli Institute for the Physics and 
Mathematics of the Universe, University of Tokyo,
5-1-5 Kashiwanoha, Kashiwa, 277-8583, Japan.

\textit{E-mail address}: yukinobu.toda@ipmu.jp

\end{document}